\newtheorem{theorem}{Theorem}
\theoremstyle{plain}
\newtheorem{claim}{Claim}
\newtheorem{conjecture}{Conjecture}
\newtheorem{corollary}{Corollary}
\newtheorem{lemma}{Lemma}
\newtheorem{problem}{Problem}
\newtheorem{proposition}{Proposition}
\numberwithin{equation}{section}
\newcommand{\tikzcircle}[2][red,fill=red]{\tikz[baseline=-0.75ex]\draw[#1,radius=#2] (0,0) circle ;}
\begin{document}
	\title{Words avoiding tangrams}
	
		\author[M. D\k ebski]{Micha{\l} D\k ebski}
	\address{Faculty of Mathematics and Information Science, Warsaw University
		of Technology, 00-662 Warsaw, Poland}
	\email{michal.debski87@gmail.com}
	
	\author[J. Grytczuk]{Jaros\l aw Grytczuk}
	\address{Faculty of Mathematics and Information Science, Warsaw University
		of Technology, 00-662 Warsaw, Poland}
	\email{jaroslaw.grytczuk@pw.edu.pl}
	
	\author[B. Pawlik]{Bart\l omiej Pawlik}
	\address{Institute of Mathematics, Silesian University of Technology, 44-100 Gliwice, Poland}
	\email{bpawlik@polsl.pl}
	
	\author[J. Przyby\l o]{Jakub Przyby\l o}
	\address{AGH University of Krakow, al. A. Mickiewicza 30, 30-059 Krakow, Poland}
	\email{jakubprz@agh.edu.pl}
	
	\author[M. \'{S}leszy\'{n}ska-Nowak]{Ma\l gorzata \'{S}leszy\'{n}ska-Nowak}
	\address{Faculty of Mathematics and Information Science, Warsaw University
		of Technology, 00-662 Warsaw, Poland}
	\email{malgorzata.nowak@pw.edu.pl}

\begin{abstract}
	A \emph{tangram} is a word in which every letter occurs an even number of times. Such word can be cut into parts that can be arranged into two identical words. The minimum number of cuts needed is called the \emph{cut number} of a tangram. For example, the word $\mathtt{\color{red}{0102}\color{blue}{0102}}$ is a tangram with cut number one, while the word $\mathtt{\color{red}{01}\color{blue}{01023}\color{red}{023}}$ is a tangram with cut number two. Clearly, tangrams with cut number one coincide with the well known family of words, known as \emph{squares}, having the form $UU$ for some nonempty word $U$.
	
	A word $W$ \emph{avoids} a word $T$ if it is not possible to write $W=ATB$, for any words $A$ and $B$ (possibly empty). The famous 1906 theorem of Thue asserts that there exist arbitrarily long words avoiding squares over alphabet with just \emph{three} letters. Given a fixed number $k\geqslant 1$, how many letters are needed to avoid tangrams with the cut number at most $k$? Let $t(k)$ denote the minimum size of an alphabet needed for that purpose. By Thue's result we have $t(1)=3$, which easily implies $t(2)=3$. Curiously, these are currently the only known exact values of this function.
	
	In our main result we prove that $t(k)=\Theta(\log_2k)$. The proof uses \emph{entropy compression} argument and \emph{Zimin words}. By using a different method we prove that $t(k)\leqslant k+1$ for all $k\geqslant 4$, which gives more exact estimates for small values of $k$. The proof makes use of \emph{Dejean words} and a curious property of \emph{Gauss words}, which is perhaps of independent interest.

	\end{abstract}
	
	\maketitle
	
\section{Introduction}\label{Section Introduction}
The main aim of this paper is to extend the famous theorem of Thue \cite{Thue} on words avoiding squares. A \emph{square} is a word of the form $UU$, where $U$ is a nonempty word. A \emph{factor} of a word $W$ is a word $F$ occurring in $W$ as a contiguous block of letters, what can be written as $W=AFB$ for some (possibly empty) words $A$ and $B$. A word $W$ is \emph{square-free} if it does not contain any square factors.

It is easy to see that every binary word of length more than three must contain a square factor. However, a beautiful theorem of Thue \cite{Thue} (see \cite{BerstelThue,Lothaire}) asserts that over alphabet with only \emph{three} letters, one may construct arbitrarily long square-free words. This result inspired a huge amount of research, giving birth to an entire discipline known as \emph{combinatorics on words} (see \cite{BeanEM,BerstelPerrin,Lothaire,LothaireAlgebraic}).

Figuratively speaking, a square is a word that can be split into two identical words with just one \emph{cut}. For instance, the word $\mathtt{hotshots}$ is an example of a square with the obvious cutting in the middle: $\mathtt{\color{red}{hots}\color{black}|\color{blue}{hots}}$. A word $T$ is called a \emph{tangram} if each letter occurs an even number of times (possibly zero) in $T$. Clearly, any tangram can be split into factors (possibly single letters) that can be arranged into two identical words. What is the minimum number of cuts needed for that purpose? For instance, the word $\mathtt{tuteurer}$ demands four cuts. Indeed, the cutting $\mathtt{\color{red}t\color{black}|\color{blue}ute\color{black}|\color{red}u\color{black}|\color{blue}r\color{black}|\color{red}er}$ allows one to make two copies of the word $\mathtt{uter}$, and no less cuts will do the job (as can be checked by hand). We denote this fact by $\mu(\mathtt{tuteurer})=4$ and say that this word has \emph{cut number} four.

More formally, the \emph{cut number} $\mu(T)$ of a tangram $T$ is the least number $k\geqslant 1$ such that $T=F_1F_2\cdots F_{k+1}$, where $F_i$ are nonempty words satisfying $F_{\sigma(1)}\cdots F_{\sigma(j)}=F_{\sigma(j+1)}\cdots F_{\sigma(k+1)}$, for some permutation $\sigma$ of the set $\{1,2,\ldots, k+1\}$ and some $1\leqslant j\leqslant k$. It is also convenient to extend the definition of the cut number to arbitrary finite words by assuming that $\mu(W)=\infty$ whenever $W$ is not a tangram.

Inspiration to study this concept comes from the famous \emph{necklace splitting theorem} (see \cite{Matousek}), which may be stated as follows. An \emph{anagram} of a word $W$ is a word $U$ obtained by rearranging the letters in $W$. For instance, the words $\mathtt{triangle}$ and $\mathtt{integral}$ form a pair of mutual anagrams. Clearly, any tangram $T$ can be cut into pieces that can be arranged into a pair of mutual anagrams. What is the least number of cuts needed for that purpose? The necklace splitting theorem asserts that $q$ cuts are always sufficient, where $q$ is the number of distinct letters in $T$. It was first proved by Goldberg and West in \cite{Goldberg-West}. An elegant proof based on the celebrated Borsuk-Ulam theorem was found by Alon and West in \cite{Alon-West}. Other proofs, extensions, or variations can be found in \cite{Alon,AlonGML,GrytczukLubawski,JoicPZ,Lason,DeLonguevilleZivaljevic,VrecicaZivaljevic}.

Using the notion of cut number one may extend the concept of square-free words as follows. A word $W$ is \emph{$k$-tangram-free} if it does not contain factors with cut number at most $k$. In other words, if $W=AFB$, then $\mu(F)\geqslant k+1$. Let $t(k)$ denote the least size of an alphabet for which there are arbitrarily long $k$-tangram free words. By Thue's theorem \cite{Thue} we know that $t(1)=3$. It is also not hard to verify that $t(2)=3$ (we will provide a simple explanation in Section 3). Our main theorem gives an upper bound on this function.

\begin{theorem}\label{Theorem Jajko}
	For every $k\geqslant 3$, we have $t(k) \leqslant 1024\lceil\log_2 k + \log_2\log_2 k\rceil$.
\end{theorem}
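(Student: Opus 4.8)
The plan is to build long $k$-tangram-free words over an alphabet whose size is $O(\log_2 k)$ by combining a square-free word (Thue) with a random coloring, and to control the appearance of low-cut-number tangrams by an entropy-compression argument. First I would observe that a tangram with cut number at most $k$ has a very restricted structure: it can be written as $F_1\cdots F_{k+1}$ with the $F_i$'s matched in pairs (in the sense of the permutation $\sigma$ in the definition), so morally it is a ``generalized square'' assembled from at most $k+1$ blocks. The key structural fact to extract is that any such tangram $T$ of length $n$ has a description of size roughly $n\log_2(\text{alphabet}) - (\text{something})$; more precisely, since half of $T$ determines the other half once the block structure is fixed, and the block structure of a factor inside a fixed square-free host word costs only $O(k\log n)$ bits to specify (the $k$ cut positions plus the pairing), a long $k$-tangram in a word $W$ compresses by about $\frac{n}{2}\log_2 q$ bits where $q$ is the alphabet size.

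The second ingredient is to work not over a bare alphabet but over a product alphabet $\Sigma = \Sigma_0 \times \Sigma_1$, where the first coordinate is governed by a fixed infinite square-free word $s$ over $3$ letters (Thue), and the second coordinate is a sequence of letters we choose greedily/randomly from an alphabet of size $q_1 = \Theta(\log_2 k)$. The point of the square-free backbone is exactly what makes \emph{Zimin words} relevant: it forbids the ``trivial'' unavoidable repetitions, so that any tangram factor of bounded cut number forced to occur must in fact force a Zimin-type pattern, and the unavoidability theory for Zimin words quantifies how long a word over $q$ letters can be before a given Zimin word $Z_m$ (which contains tangrams of cut number $2^{m}-1$ or so) must occur. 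Calibrating $m$ against $k$ gives the $\log_2 k$ scaling, and the extra $\log_2\log_2 k$ term comes from the second-order term in the bound on the length of $Z_m$-free words as a function of the alphabet size.

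Concretely, the entropy-compression run goes as follows. Process the host square-free word letter by letter, at each step appending a uniformly random second-coordinate letter; if this creates a factor which is a tangram of cut number $\le k$, record enough information to undo the last few steps and backtrack. The bookkeeping log at each bad event stores: which suffix length was the offending tangram, the $\le k+1$ block lengths, the pairing $\sigma$, and one of the two halves of the tangram. The total number of letters ever written minus the length of the log must stay bounded if the process never terminates, so the inequality to verify is that the per-step entropy gain $\log_2 q_1$ strictly exceeds the amortized log cost, which is dominated by $\frac{1}{2}\log_2 q_1 \cdot(\text{average tangram length}) / (\text{average tangram length})= \frac12\log_2 q_1$ plus lower-order $O(\log k)$ terms coming from encoding the cut data. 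Choosing $q_1$ a large enough constant multiple of $\log_2 k + \log_2\log_2 k$ makes this strict, and the constant $1024$ is just a safe explicit choice absorbing all the $O(\cdot)$'s.

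The hard part will be making the counting in the entropy-compression log airtight: one must show that the offending tangram, together with only $O(k\log n)$ extra bits of side information, can be reconstructed from its ``first half,'' so that each backtracking step genuinely pays for itself. The subtlety is that the block structure of a cut-number-$k$ tangram is not as rigid as that of a plain square --- the pairing $\sigma$ can be wild --- so the encoding of $\sigma$ and the block lengths must be shown to cost only $O(k \log n)$ bits rather than something growing with $n$ alone, and crucially this cost must be offset against the \emph{length} of the tangram (which is at least linear in the number of distinct blocks but can be much longer), not against $k$. Getting the amortization right --- i.e., that even short tangrams of small cut number are rare enough, while long ones compress a lot --- is where the square-free backbone and the Zimin-word length estimates do the real work, and that is the step I expect to require the most care.
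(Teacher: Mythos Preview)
Your high-level architecture (product alphabet, entropy compression, Zimin words) matches the paper's, but the roles of the two coordinates are swapped, and this swap is not cosmetic --- it is exactly where your sketch breaks.

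In the paper, the product is $\mathbb{A}\times\mathbb{B}$ with $|\mathbb{A}|=1024$ (a \emph{constant}) carrying the entropy-compression coordinate, and $|\mathbb{B}|=\lceil\log_2 k+\log_2\log_2 k\rceil$ carrying a deterministic \emph{Zimin-based} word. The division of labour is clean: the $\mathbb{B}$-coordinate is the periodic word $Z_{q-1}a_qZ_{q-1}a_q\cdots$, and one proves directly that it contains \emph{no tangram at all} of length below $2^q\geqslant k\log_2 k$. Thus every tangram the entropy-compression coordinate must ever confront has length at least $k\log_2 k$, and so there are at most $N/(k\log_2 k)$ backtracking events; the permutation cost $(k+1)!\leqslant k^k$ then amortises to $(k^k)^{N/(k\log_2 k)}=2^N$, and a constant alphabet of size $1024$ suffices. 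Zimin words are used here purely as a constructive gadget with no short tangrams --- not via any pattern-unavoidability theory.

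Your version puts a Thue square-free word (three letters) on the first coordinate and asks the random $\Theta(\log k)$-letter coordinate to do all the work via entropy compression. The square-free backbone buys you nothing for short tangrams: as the paper itself notes, ternary square-free words already contain tangrams of cut number $3$ (e.g.\ $bcabac$). So your random coordinate must handle tangrams of every length, and for a tangram of length, say, $2k$ the log must store the permutation $\sigma$ at cost $\approx k\log_2 k$ bits while the ``half saved'' is only $k\log_2 q_1\approx k\log_2\log_2 k$ bits --- the compression inequality fails. Your last paragraph correctly flags short tangrams as the delicate point, but the mechanism you propose (square-free backbone plus Zimin unavoidability estimates) does not supply the missing lower bound on tangram length; the paper's move is to \emph{eliminate} short tangrams outright on the other coordinate, which is what makes the constant-alphabet entropy compression go through.
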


The proof of this result is based on the entropy compression method (see, for example, \cite{BosekGNZ,GrytczukKM}) and the well known sequence of \emph{Zimin words}---a fundamental structure used in the pattern avoidance theory (see \cite{BeanEM,LothaireAlgebraic}). The sequence of Zimin words is defined recursively as follows: $Z_1=a$, $Z_2=aba$, $Z_3=abacaba$, and, in general, $Z_n=Z_{n-1}xZ_{n-1}$, where $x$ is a new letter. We are also using these words in a simple proof of the lower bound, $t(k)\geqslant \log_2k$ (see Section \ref{Section Lower}), which shows that $t(k)=\Theta(\log_2k)$. The multiplicative constant $1024$ in the theorem is chosen just for convenience in computations. Most probably it can be improved by more exact calculations or applications of some other related techniques, like the Lov\'{a}sz Local Lemma (see \cite{AlonSpencer}) or the Rosenfeld counting (see \cite{Rosenfeld1,Rosenfeld2}).

By using different approach we get a weaker upper bound, which, however, gives better estimate for $t(k)$ when $k$ is small (up to $k\leqslant 18426$).

\begin{theorem}\label{Theorem Main t(k)}
For every $k\geqslant 4$, we have $t(k)\leqslant k+1$.
\end{theorem}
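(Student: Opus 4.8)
The plan is to show that, for every $k\geqslant 4$, any infinite \emph{Dejean word} over $q=k+1$ letters is $k$-tangram-free; together with Dejean's theorem this yields $t(k)\leqslant k+1$. The role of the hypothesis $k\geqslant 4$ is that then $q\geqslant 5$, so Dejean's theorem supplies, in its clean form, an infinite word $D$ over $q$ letters containing no factor of exponent exceeding $\tfrac{q}{q-1}=\tfrac{k+1}{k}$; in particular $D$ is square-free. I would argue by contradiction: assume some factor $F$ of $D$ is a tangram with $\mu(F)=m\leqslant k$.

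First I would reduce $F$ to a Gauss word. Fix a cutting realizing $\mu(F)=m$, say $F=F_1\cdots F_{m+1}$, together with a permutation $\sigma$ and an index $j$ with $F_{\sigma(1)}\cdots F_{\sigma(j)}=F_{\sigma(j+1)}\cdots F_{\sigma(m+1)}=U$, so $|F|=2|U|$. The ``first copy'' $\{F_{\sigma(1)},\dots,F_{\sigma(j)}\}$ and the ``second copy'' give two factorizations of $U$; their common refinement cuts $U$ into nonempty chunks $C_1,\dots,C_s$ (listed in the order they occur in $U$), and since the two factorizations contribute $j-1$ and $m-j$ interior cut points, $s\leqslant m\leqslant k$. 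Each chunk lies inside exactly one first-copy block and one second-copy block, hence occupies two disjoint, specified intervals of $F$, and these $2s$ intervals tile $F$. Recording the chunk labels along $F$ produces a \emph{Gauss word} $G=g_1\cdots g_{2s}$ on $s$ symbols, each appearing exactly twice -- this is the reformulation the argument hinges on.

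Next I would turn Dejean's property into inequalities. For each $i$ let $W_i$ be the portion of $F$ strictly between the two occurrences of $C_i$; then $F$ contains the factor $C_iW_iC_i$, which has period $|C_i|+|W_i|$ and length $2|C_i|+|W_i|$, hence exponent $1+\tfrac{|C_i|}{|C_i|+|W_i|}$. Since $D$ forbids exponents above $\tfrac{k+1}{k}$, we must have $|W_i|\geqslant (k-1)|C_i|$ for all $i$. Let $M$ be the $s\times s$ matrix with $M_{ij}$ equal to the number of occurrences of $C_j$ strictly between the two occurrences of $C_i$; then $M_{ii}=0$, and with $w=(|C_1|,\dots,|C_s|)^{\mathsf T}>0$ the inequalities read $Mw\geqslant (k-1)w$. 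Iterating gives $M^nw\geqslant (k-1)^nw$, so $\rho(M)\geqslant k-1$.

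The contradiction comes from the promised property of Gauss words: \emph{for any Gauss word on $s$ symbols other than (a renaming of) the square $12\cdots s\,12\cdots s$, one has $\rho(M)<s-1$.} Granting this, and using $s\leqslant k$, we get $\rho(M)<s-1\leqslant k-1$, contradicting $\rho(M)\geqslant k-1$; in the exceptional case $G=12\cdots s\,12\cdots s$ the word $F=UU$ is a square, which $D$ does not contain. To prove the property I would note that for distinct $C_i,C_j$ the unordered pair $(M_{ij},M_{ji})$ equals $(2,0)$ or $(0,2)$ when one arc nests in the other, $(1,1)$ when the arcs cross, and $(0,0)$ when they are disjoint; hence $M+M^{\mathsf T}=2A$, where $A$ is the adjacency matrix of the graph on $\{C_1,\dots,C_s\}$ joining non-disjoint chunks. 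Since $M$ is nonnegative, $\rho(M)$ is an eigenvalue with a real eigenvector $v$, and because $v^{\mathsf T}(M-M^{\mathsf T})v=0$ we obtain $\rho(M)=\tfrac{v^{\mathsf T}Av}{v^{\mathsf T}v}\leqslant\lambda_{\max}(A)=\rho(A)\leqslant s-1$. The hard part will be the equality analysis: if this chain is tight then $A=K_s$, $v$ is forced to be the all-ones vector, so every row sum of $M$ equals $s-1$, and a short inspection of the chord diagram of $G$ then shows that all first occurrences precede all second occurrences with matching orders, i.e. $G$ is the trivial square. Carrying out that chord-diagram step cleanly is the main obstacle; everything else is bookkeeping.
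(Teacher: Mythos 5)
Your proposal is correct, and it follows the paper's overall architecture --- pass to Dejean words over $q=k+1\geqslant 5$ letters, refine the two factorizations of $U$ into a common chord-diagram (Gauss word) pattern on $s\leqslant k$ symbols, and show that some chunk must repeat too soon unless the pattern is the trivial square --- but you prove the key quantitative lemma by a different method. The paper replaces each chord (twin pair $X$ at positions $p_i<p_j$) by the $|X|$ shifted segments $[p_i+h,p_j+h]$, proves by induction that $n$ segments whose endpoints partition $[2n]$ have total geometric length at most $n^2$, and finishes with Cauchy--Schwarz; you instead encode the chord diagram in the betweenness matrix $M$, note $M+M^{\mathsf T}=2A$ for the crossing-or-nesting graph $A$, and play $\rho(M)\geqslant k-1$ (from $Mw\geqslant (k-1)w$) against $\rho(M)\leqslant\lambda_{\max}(A)\leqslant s-1$. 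The two routes are really two faces of one inequality: your observation that $M_{ij}+M_{ji}\leqslant 2$ for every pair of chords is exactly what powers the paper's interval lemma, and if you pair $Mw\geqslant(k-1)w$ with $w$ itself rather than with the Perron vector you get $(k-1)\|w\|^2\leqslant w^{\mathsf T}Mw\leqslant\bigl(\sum_i w_i\bigr)^2-\|w\|^2$, which combined with $\bigl(\sum_i w_i\bigr)^2\leqslant s\|w\|^2$ yields $k\leqslant s$ with no Perron--Frobenius at all. So your spectral packaging is a legitimate, arguably more transparent, alternative to the paper's inductive interval count; it costs a little extra machinery in the equality analysis.

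The step you flag as the main obstacle is in fact immediate: if the chord of $C_i$ has endpoints $a_i<b_i$ in $[2s]$, the $i$-th row sum of $M$ counts the endpoints lying strictly between them, i.e.\ equals $b_i-a_i-1$. Hence $M\mathbf{1}=(s-1)\mathbf{1}$ forces $b_i=a_i+s$ for every $i$, so the first occurrences occupy positions $1,\dots,s$ and the second occurrences repeat them in the same order --- the trivial square, whence $F=UU$ contradicts square-freeness of $D$. One bookkeeping remark: distinct chunks of the common refinement may coincide as words, so your recorded pattern need not literally satisfy the ``$p_i=p_j$ iff $F_i=F_j$'' condition; this is harmless because $M$ is indexed by chunk positions and the Dejean bound is applied to each matched pair separately, but it deserves a sentence.
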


The proof is based on a key observation concerning the cut number of \emph{Dejean words}, which are defined as follows. Let $\mathbb{A}$ be a fixed alphabet of size $r\geqslant 2$. For a word $W=w_1w_2\cdots w_n$, $w_i\in\mathbb{A}$, denote by $|W|=n$ the \emph{length} of $W$. Suppose that $F$ is a factor of $W$. If $F=w_iw_{i+1}\cdots w_{i+m-1}$, with $|F|=m$, then we say that $F$ \emph{occurs} in $W$ \emph{at position} $i$. The \emph{distance} between two different occurrences of $F$ in $W$, namely $F=w_{i}w_{i+1}\cdots w_{i+m-1}$ and $F=w_{j}w_{j+1}\cdots w_{j+m-1}$, with $i<j$, is defined as $j-i$. In particular, if $W$ is a square-free word, then the distance between any two different occurrences of a factor $F$ in $W$ is at least $|F|+1$.

In 1972 Fran\c{c}oise Dejean \cite{Dejean1972} started to investigate words with maximum possible distance between consecutive occurrences of same factors. She proved \cite{Dejean1972} that there exist arbitrarily long \emph{ternary} words in which every factor $F$ repeats at distance at least $\frac{4}{3}|F|$, which is in this respect the best possible strengthening of the theorem of Thue. She also found \cite{Dejean1972} that in the analogous problem for \emph{quaternary} words, the best possible bound for the respective distance of repeated factors is $\frac{5}{2}|F|$. This was confirmed by a sophisticated construction found by Pansiot in \cite{Pan1984}. For alphabets of size $r\geqslant 5$, Dejean made a conjecture \cite{Dejean1972} that the repeated factors distance in extremal words will be at least $(r-1)|F|$, which again is best possible. This conjecture stimulated a tremendous amount of research (see \cite{Moh2007, Oll1992, Pan1984}) culminating in a breakthrough result by Carpi \cite{Carpi2007}, who confirmed it for all alphabet sizes $r\geqslant 33$. The missing cases of
 $r$ were solved subsequently by Currie and Rampersad \cite{CurrieRampersad}, and independently by Rao \cite{Rao2011}. So, now we know that the following theorem is true.

\begin{theorem}[Dejean's Conjecture]\label{Theorem Dejean}
	For every $r\geqslant 5$, there exist arbitrarily long words $D$ over an alphabet of size $r$ such that every factor $F$ of $D$ repeats at distance at least $(r-1)|F|$.
\end{theorem}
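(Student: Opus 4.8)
The plan is to reduce the statement to the construction of infinite words avoiding short repetitions, and then to follow the morphic route along which the conjecture was actually settled. First I would pass to the equivalent formulation in terms of fractional powers. If a factor $F$ occurs in $D$ at two positions $i<j$ with $j-i=p$, then the factor of $D$ of length $|F|+p$ beginning at position $i$ has period $p$, hence is a power of exponent $1+|F|/p$; conversely, a factor of period $p$ and exponent $\alpha>1$ has a prefix of length $(\alpha-1)p$ that reoccurs at distance $p$. A short computation shows that the requirement ``every two occurrences of every factor $F$ of $D$ lie at distance at least $(r-1)|F|$'' holds if and only if $D$ contains no factor whose length exceeds $\tfrac{r}{r-1}$ times its shortest period, i.e.\ if and only if $D$ is $\bigl(\tfrac{r}{r-1}\bigr)^{+}$-power-free. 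Since truncating an infinite word with this property to its prefixes yields arbitrarily long finite words with the same property, it suffices to construct, for each fixed $r\geqslant 5$, one infinite word over an $r$-letter alphabet avoiding all powers of exponent strictly greater than $\tfrac{r}{r-1}$.

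For large alphabets --- concretely $r\geqslant 33$ --- I would build this word via Carpi's family of morphisms \cite{Carpi2007}: one defines a uniform morphism $\varphi$ on an auxiliary alphabet (whose letters encode residues related to $r-1$), takes a fixed point of $\varphi$, and projects it onto $r$ letters; the resulting word $D$ is the candidate. The substantial part is to verify that $D$ has no factor of exponent $>\tfrac{r}{r-1}$, and this is done by a \emph{desubstitution} argument: assuming such a factor $G$ exists, one exploits the fact that the $\varphi$-images are synchronising --- so that block boundaries can be recovered from the letter sequence --- to pin down the position of $G$ relative to the $\varphi$-block structure, strip off one application of $\varphi$, and obtain a strictly shorter factor of a closely related word that still carries a power of exponent $>\tfrac{r}{r-1}$. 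Iterating, every hypothetical repetition is traced back to one of bounded length, and these residual cases are eliminated by a finite (computer-assisted) inspection. Carpi organises the bookkeeping through the notion of a \emph{kernel repetition}, which isolates exactly those repetitions capable of surviving a desubstitution and supplies the needed size bound.

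The remaining alphabet sizes $5\leqslant r\leqslant 32$ admit no uniform treatment, so I would patch them in, range by range, following the literature: the smallest values by the constructions of Moulin-Ollagnier \cite{Oll1992} and of Mohammad-Noori and Currie \cite{Moh2007}, and the rest by Currie and Rampersad \cite{CurrieRampersad}, who pushed Carpi's method down to smaller $r$, and independently by Rao \cite{Rao2011}, whose exhaustive backtracking search both produces the requisite morphisms and certifies their power-freeness. The main obstacle, present everywhere, is the \emph{extremality} of the threshold $\tfrac{r}{r-1}$: there is essentially no slack between what must be avoided and what cannot be avoided, so the desubstitution analysis must be carried out with complete precision and the finite verifications sit exactly on the boundary of avoidability. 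It is this tightness that blocked any single clean argument and forced the eventual proof to combine Carpi's asymptotic construction with a separate verification for each small $r$.
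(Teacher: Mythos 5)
The paper does not prove this theorem at all---it is imported verbatim from the literature (Dejean, Pansiot, Carpi, Currie--Rampersad, Rao), so your proposal is necessarily being compared against a citation rather than an argument. Your reduction of the distance condition to $\bigl(\tfrac{r}{r-1}\bigr)^{+}$-power-freeness is correct, and your outline accurately reflects how the conjecture was actually settled, so this is essentially the same (and the only reasonable) approach.
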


Any word satisfying the above distance property will be called a \emph{Dejean word}. In order to prove Theorem \ref{Theorem Main t(k)}, we just look at the cut number of Dejean words. We will prove the following result, which, in view of Theorem \ref{Theorem Dejean}, immediately implies Theorem \ref{Theorem Main t(k)}.

\begin{theorem}\label{Theorem Dejean Cut Number}
	The cut number of any Dejean word $D$ over alphabet of size $r\geqslant 5$ satisfies $\mu(D)\geqslant r$.
\end{theorem}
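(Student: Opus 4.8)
The plan is to argue by contradiction: suppose $D$ is a Dejean word over an alphabet of size $r \geq 5$ that contains a factor $T$ with cut number $\mu(T) = k \leq r-1$. Then $T = F_1 F_2 \cdots F_{k+1}$ with nonempty pieces, and there is a permutation $\sigma$ and an index $j$ so that $U := F_{\sigma(1)}\cdots F_{\sigma(j)} = F_{\sigma(j+1)}\cdots F_{\sigma(k+1)}$. The key structural fact is that every letter of $T$ belongs to one copy of $U$ on the left of the cut and appears at a \emph{matching position} in the right copy. Concretely, writing $T = u_1 u_2 \cdots u_{2\ell}$ where $\ell = |U|$ and each $u_i$ is a single letter, the decomposition into pieces lets us pair each position $i$ of the first half with a position $\pi(i)$ of the word so that $T_i = T_{\pi(i)}$, where $\pi$ shifts position $i$ in $U$'s first-copy layout to the corresponding position in $U$'s second-copy layout. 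I would make this precise by tracking, for each of the $k+1$ pieces $F_m$, its starting position inside $T$ both in the original order and in the rearranged order (left block vs.\ right block).

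Next I would extract a long equal factor with a small displacement. Since $U$ appears both as $F_{\sigma(1)}\cdots F_{\sigma(j)}$ and as $F_{\sigma(j+1)}\cdots F_{\sigma(k+1)}$, and these two occurrences inside $T$ may be interleaved rather than disjoint, I cannot directly point to two occurrences of the whole word $U$ at bounded distance. Instead, the right approach is to look at the individual pieces: among the $k+1$ pieces, pick the longest one, say $F_{m^\star}$, which has length at least $|T|/(k+1)$. This piece occurs inside $T$ twice --- once in its original location and once in the rearranged location coming from whichever side of the cut it landed on --- giving two occurrences of the same factor of length $\geq |T|/(k+1)$. The distance between these two occurrences is at most $|T|$ (they both lie inside $T$), so by the Dejean property
\[
|T| \;\geq\; (r-1)\,|F_{m^\star}| \;\geq\; (r-1)\,\frac{|T|}{k+1}.
\]
If the two occurrences are genuinely distinct, this forces $k+1 \geq r-1$, i.e.\ $k \geq r-2$, which is not yet the desired $k \geq r$; so I will need to sharpen the bound on the displacement. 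The improvement comes from noting that the two occurrences of $F_{m^\star}$ cannot straddle the cut in a way that wastes length: the displacement between the two copies of $F_{m^\star}$ is exactly the difference between the prefix of $T$ before its original position and the prefix before its rearranged position, and this is bounded by $|U| = |T|/2$, not $|T|$. That yields $|T|/2 \geq (r-1)|T|/(k+1)$, hence $k+1 \geq 2(r-1)$, which is even stronger than needed --- so in fact a cleaner accounting of exactly which pieces can be the longest, together with the parity constraint $|T| = 2|U|$, should comfortably give $\mu(D) = k \geq r$.

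The main obstacle I anticipate is the bookkeeping in the second paragraph: a piece $F_{m^\star}$ that sits to the left of the cut in the rearranged word could already be to the left of the midpoint of $T$ in the original word, so the two occurrences might not be ``distinct'' in the naive sense, or the displacement could be $0$ --- meaning $F_{m^\star}$ is a fixed point of the rearrangement and contributes nothing. I will need to argue that \emph{not all} pieces can be fixed (otherwise the permutation $\sigma$ is trivial on blocks and one checks directly the cut number cannot be that small, or $T$ is itself essentially a square with $\mu(T)=1$), and then select the longest among the \emph{moved} pieces, bounding its displacement carefully. A secondary subtlety is that Dejean's distance condition is about occurrences of a factor inside the infinite Dejean word $D$, not inside the finite factor $T$; but since $T$ is a factor of $D$, any two occurrences inside $T$ are also occurrences inside $D$ at the same distance, so the property transfers without loss. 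Once these points are handled, the inequality chain closes and gives $\mu(D) \geq r$.
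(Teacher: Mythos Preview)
There is a genuine gap in your second paragraph: the piece $F_{m^\star}$ does \emph{not} occur twice inside $T$. Each $F_i$ occurs exactly once in $T$; the equation $F_{\sigma(1)}\cdots F_{\sigma(j)}=F_{\sigma(j+1)}\cdots F_{\sigma(k+1)}$ holds in the abstract word $U$, not inside $T$. If $F_{m^\star}$ lies on (say) the left side, then it equals some factor of the concatenation $F_{\sigma(j+1)}\cdots F_{\sigma(k+1)}$, but that concatenation is built from pieces scattered throughout $T$ and is not a contiguous factor of $T$. Hence there is no ``second occurrence'' of $F_{m^\star}$ to which Dejean's distance condition can be applied, and the inequality chain never starts. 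Your ``displacement $\le |U|=|T|/2$'' claim is likewise unfounded: even when a block does repeat in $T$, nothing forces its two copies to sit within half the length of $T$ of each other.

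The structural step you glimpse in your first paragraph (the position pairing $\pi$) is the right one, but you abandon it. Aligning the two sides of the equation letter by letter refines $T=F_1\cdots F_{k+1}$ into a finer factorization $T=G_1G_2\cdots G_{2s}$ with $s\le k$ in which every block $G_i$ \emph{does} appear exactly twice in $T$ (the paper calls this a Gauss factorization). But even with these genuine twins in hand, picking the longest one does not close the argument: the longest twin is only guaranteed length $\ge |T|/(2s)$, and its displacement can be as large as $|T|-|X|$, which yields only $k\ge r/2$. The paper instead proves that \emph{some} twin pair $G_i=G_j=X$ has displacement strictly less than $s|X|$ by a global averaging: summing (displacement)$\times$(length) over all $s$ twin pairs, bounding this sum above by $(|T|/2)^2$ via a combinatorial lemma on interval endpoints, and below by $s\sum|X_h|^2\ge(|T|/2)^2$ via Cauchy--Schwarz. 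That comparison, together with ruling out the square case, is what forces a contradiction with the Dejean property; your proposal contains no analogue of it.
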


The proof of this theorem is based on an intriguing property of \emph{Gauss words}, which are simply tangrams with every letter occurring exactly twice. For instance, $AABCBDECED$ is a Gauss word with five distinct letters. This specific type of words arose in Gauss' investigations \cite{Gauss} of closed self-crossing curves in the plane with crossing points of multiplicity exactly two. Traversing such a curve produces just such a word, called a \emph{Gauss code} of the curve (see Figure \ref{Figure GaussCurve}). Gauss tried to characterize words that may arise as codes of such curves. His plan was completed much later by Lov\'{a}sz and Marx \cite{LovaszMarx}, Rosenstiehl \cite{Rosenstiehl}, and de Fraysseix and Ossona de Mendez \cite{FraysseixOssonaDeMendez}.

\begin{figure}[ht]
	
	\begin{center}
		
		\resizebox{7cm}{!}{
			
			\includegraphics{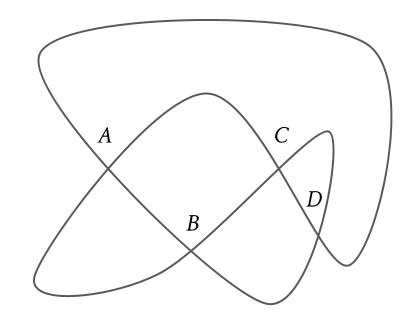}
			
		}
		\caption{An example of a closed self-crossing curve with four crossing points of multiplicity two whose Gauss code is $ACDABDCB$.}
		\label{Figure GaussCurve}
	\end{center}
\end{figure}

In the proof of Theorem \ref{Theorem Dejean Cut Number} we look at repeated factors in factorizations of words encoded by Gauss words. The details are explained in Section \ref{Section Gauss}. In Section \ref{Section Entropy} we provide a proof of Theorem \ref{Theorem Jajko}. The lower bound, together with some other simple facts, are proved in Section \ref{Section Lower}. The last section contains a brief discussion of possible future research.

\section{Gauss words and factorization patterns; proofs of Theorems \ref{Theorem Main t(k)} and \ref{Theorem Dejean Cut Number}}\label{Section Gauss}

Let $W$ be a word and let $W=F_1F_2\cdots F_n$ be a factorization of $W$ into non-empty factors. A word $P=p_1p_2\cdots p_n$ is called a \emph{pattern} of this factorization if, for each pair of indices $i\neq j$, we have $p_i=p_j$ if and only if $F_i=F_j$. For example, if $P=xyyzxz$, then $W=1234abcabcuv1234uv$ is a factorization with pattern $P$, as can be seen by applying substitutions $x=1234$, $y=abc$, and $z=uv$ to the word $P$, or by appropriate cutting: $$W=1234|abc|abc|uv|1234|uv.$$

Let us explain now the connection between Gauss words and the cut number on a simple example. Consider a tangram $T$ with the cut number $\mu(T)=4$. So, we have $$T=F_1F_2F_3F_4F_5,$$ and there is a permutation of the five factors giving two copies of the same word, say, $$F_3F_1=F_5F_2F_4.$$ Now, given the exemplary lengths of $F_i$'s, above equation leads to a more fragmented factorization of $T$, namely, $$T=DBABCCDA,$$ as shown in Figure \ref{Figure CuttingGauss}.

It should be clear that in general, any tangram $T$ with the cut number $k$ has a factorization whose pattern coincides with some Gauss word on at most $k$ distinct letters (depending on the arrangement of initial factors $F_i$ in the equation). We state this property more formally in the following lemma, whose easy proof is omitted.

\begin{figure}[ht]
	
	\begin{center}
		
		\resizebox{8cm}{!}{
			
			\includegraphics{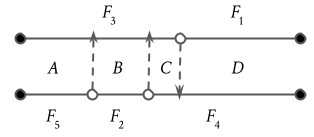}
			
		}
		\caption{Comparison of factors in equation $F_3F_1=F_5F_2F_4$ leading to identities $F_3=ABC$, $F_1=D$, $F_5=A$, $F_2=B$, $F_4=CD$, which give fragmented factorization $T=F_1F_2F_3F_4F_5=DBABCCDA$.}
		\label{Figure CuttingGauss}
	\end{center}
\end{figure}

\begin{lemma}\label{Lemma Cuts and Gauss Words}
	Every tangram $T$ has a factorization $T=G_1G_2\cdots G_{2s}$ whose pattern is a Gauss word $G=g_1g_2\cdots g_{2s}$, with $s\leqslant \mu(T)$.
\end{lemma}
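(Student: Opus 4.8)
The plan is to convert an optimal rearrangement witnessing $\mu(T)=k$ into a finer factorization by overlaying the two equal halves, exactly as in the worked example preceding the lemma. Write $k=\mu(T)$ and fix a factorization $T=F_1F_2\cdots F_{k+1}$ into nonempty blocks, a permutation $\sigma$ of $\{1,\dots,k+1\}$, and an index $1\le j\le k$ with $F_{\sigma(1)}\cdots F_{\sigma(j)}=F_{\sigma(j+1)}\cdots F_{\sigma(k+1)}$; call this common word $U$. The two sides display $U$ as a concatenation of $j$ blocks and, separately, of $k+1-j$ blocks. I would then take $U=E_1E_2\cdots E_m$ to be the common refinement of these two factorizations, i.e.\ the factorization of $U$ whose set of interior cut points is the union of the interior cut points of the two given ones. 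That union has at most $(j-1)+(k-j)=k-1$ elements, so $m\le k$, and each block $F_{\sigma(i)}$ is a concatenation of consecutive $E_\ell$'s.

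Substituting into $T=F_1F_2\cdots F_{k+1}$ the decomposition of each $F_i$ into $E_\ell$'s produces a refined factorization $T=G_1G_2\cdots G_N$ in which every $G_r$ equals some $E_\ell$. Since $F_{\sigma(1)}\cdots F_{\sigma(j)}=U=E_1\cdots E_m$, the blocks $F_{\sigma(1)},\dots,F_{\sigma(j)}$ contribute each of $E_1,\dots,E_m$ exactly once after refinement, and likewise for the complementary blocks $F_{\sigma(j+1)},\dots,F_{\sigma(k+1)}$. Hence $N=2m$ and each $E_\ell$ occurs exactly twice among $G_1,\dots,G_{2m}$. Provided the words $E_1,\dots,E_m$ are pairwise distinct, every value occurring among $G_1,\dots,G_{2m}$ occurs exactly twice, so the pattern $G=g_1g_2\cdots g_{2m}$ of this factorization is a word of length $2m$ in which each of its $s=m$ letters appears exactly twice; that is, $G$ is a Gauss word with $s=m\le k=\mu(T)$, which is what the lemma asserts.

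The only issue is the distinctness of $E_1,\dots,E_m$: a priori two refinement blocks could be equal as words, in which case the pattern above would be a tangram but not a Gauss word. I would settle this by using minimality of the chosen factorization: one checks that if $E_p=E_q$ for some $p\ne q$, then merging suitable consecutive blocks yields a rearrangement of $T$ into two equal halves using strictly fewer than $k$ cuts, contradicting $\mu(T)=k$; consequently an optimal factorization automatically has pairwise distinct refinement blocks. (Alternatively, one simply coarsens the $2m$-block factorization $T=G_1\cdots G_{2m}$ so as to remove any repeated value, which can only decrease $s$.) I expect this last step — the bookkeeping that rules out repeated blocks — to be the only mildly delicate point; the overlay-and-refine construction itself is routine.
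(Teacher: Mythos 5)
Your overlay-and-refine construction is precisely the argument the paper has in mind (the proof is omitted there as ``easy,'' but this is exactly what Figure 2 and the surrounding discussion describe), and everything up to the production of $2m$ blocks matched into $m\leqslant\mu(T)$ twin pairs is correct. The gap is in your last paragraph, and it is not just delicate bookkeeping: both of your proposed fixes fail. Take $T=\mathtt{aaabba}$. This is a tangram and not a square, and one checks that none of the six one-versus-two rearrangements with two cuts is possible, while $T=\mathtt{a}|\mathtt{a}|\mathtt{ab}|\mathtt{ba}$ with $F_2F_4=F_3F_1=\mathtt{aba}$ shows $\mu(T)=3$. Here $U=\mathtt{aba}$, its two factorizations are $\mathtt{a}|\mathtt{ba}$ and $\mathtt{ab}|\mathtt{a}$, and the common refinement is $E_1=\mathtt{a}$, $E_2=\mathtt{b}$, $E_3=\mathtt{a}$: two refinement blocks coincide as words even though the factorization is optimal, so the minimality argument you invoke is false. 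Coarsening cannot help either: a factorization of this $T$ with a genuine Gauss pattern and $s\leqslant 3$ would need $2s$ blocks taking exactly $s$ distinct values, each occurring twice, and checking $s=1,2,3$ (for $s=3$ all six blocks are single letters, giving pattern $xxxyyx$) shows that no such factorization of $\mathtt{aaabba}$ exists. Indeed every optimal factorization of this $T$ produces a repeated refinement block, so with the paper's biconditional definition of pattern the lemma is false as literally stated; no proof of the literal statement can succeed.

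What your construction does prove --- and all that the proof of Theorem \ref{Theorem Twin-Distance Discrete} actually uses --- is the weaker statement: $T$ admits a factorization into $2s\leqslant 2\mu(T)$ nonempty blocks together with a perfect matching of the block positions into $s$ pairs of equal factors; equivalently, there is a Gauss word $g_1g_2\cdots g_{2s}$ satisfying only the implication $g_i=g_j\Rightarrow G_i=G_j$. The twin-distance argument never needs distinct pairs to carry distinct words; it only needs the pairing, so that the segment endpoints partition $[2n]$. The right move is therefore to stop after your second paragraph, record the matching rather than the (biconditional) pattern, and not attempt to force the $E_\ell$ to be pairwise distinct.
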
 

Factorizations of words whose patterns are Gauss words will be called shortly \emph{Gauss factorizations} with \emph{Gauss patterns}. Notice that one may produce plenty of words having Gauss factorizations simply by substituting arbitrary words for letters in arbitrarily chosen Gauss words as patterns.

Let $T$ be a tangram with Gauss factorization $T=G_1G_2\cdots G_{2s}$ having pattern $G=g_1g_2\cdots g_{2s}$. Let us call two equal factors $G_i=G_j$ a pair of \emph{twins}. So, $T$ consists of $s$ pairs of twins. We are going to prove that for at least one pair of twins, their relative distance cannot be too large. We will need two technical lemmas.

\begin{lemma}\label{Lemma Sum of Squares Inequality}
	Let $a_1, a_2,\ldots, a_s$ be any nonnegative real numbers. Then $$a_1^2+a_2^2+\cdots +a_s^2\geqslant \frac{1}{s}(a_1+a_2+\cdots+ a_s)^2,$$
	with equality if and only if $a_1=a_2=\cdots =a_s$.
\end{lemma}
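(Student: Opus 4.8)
The plan is to prove this as the special case of the Cauchy--Schwarz inequality obtained by pairing the vector $(a_1,\dots,a_s)$ with the all-ones vector; equivalently, and in a fully self-contained way, via the Lagrange-type identity
$$s\sum_{i=1}^s a_i^2 - \Big(\sum_{i=1}^s a_i\Big)^2 = \sum_{1\leqslant i<j\leqslant s}(a_i-a_j)^2.$$
First I would verify this identity by expanding the right-hand side. Each summand is $(a_i-a_j)^2=a_i^2-2a_ia_j+a_j^2$, so after summing over the $\binom{s}{2}$ unordered pairs, every term $a_k^2$ occurs exactly $s-1$ times (once for each partner $j\neq k$), contributing $(s-1)\sum a_i^2$, while the cross terms contribute $-2\sum_{i<j}a_ia_j$. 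Since $\big(\sum a_i\big)^2=\sum a_i^2+2\sum_{i<j}a_ia_j$, the right-hand side equals $(s-1)\sum a_i^2-\big[\big(\sum a_i\big)^2-\sum a_i^2\big]=s\sum a_i^2-\big(\sum a_i\big)^2$, as claimed.

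Given the identity, the inequality is immediate: the right-hand side is a sum of squares of real numbers, hence nonnegative, and this rearranges to exactly $a_1^2+\cdots+a_s^2\geqslant \tfrac1s(a_1+\cdots+a_s)^2$. For the equality clause, a finite sum of squares vanishes if and only if every summand vanishes, i.e. $a_i=a_j$ for all $i<j$, which is precisely the condition $a_1=a_2=\cdots=a_s$.

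Since the argument is entirely routine, there is no genuine obstacle here; the only point deserving a moment's attention is the equality characterization, which is settled by the observation above. I would also note in passing that the nonnegativity hypothesis is not actually needed for this particular estimate --- it holds verbatim for all real $a_i$ --- and is included only because the numbers $a_i$ arising in our application (lengths of factors, or rather quantities derived from them) are nonnegative. One could equally invoke Cauchy--Schwarz directly, or the convexity (Jensen inequality) of the map $x\mapsto x^2$, but the short identity above keeps the proof elementary and self-contained.
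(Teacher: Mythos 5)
Your proof is correct and complete, including the equality characterization. The paper simply omits the proof, remarking that the lemma is a direct consequence of the Cauchy--Schwarz inequality; your Lagrange-identity computation is a correct, self-contained rendering of exactly that special case (pairing $(a_1,\dots,a_s)$ with the all-ones vector), and your observation that nonnegativity of the $a_i$ is not actually needed is also accurate.
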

This lemma is a direct consequence of the well-known Cauchy-Schwarz inequality, so, the proof is omitted.

Let $X=[a,b]$, $a\leqslant b$, be a non-empty segment of consecutive integers. We denote by $l(X)$ the \emph{geometric length} of $X$, that is, $l(X)=b-a=|X|-1$. We will also use a standard notation $[n]=\{1,2,\ldots,n\}$ for the initial segment of $n$ positive integers.

\begin{lemma}\label{Lemma Interwals on 2n Balls}
	Let $n\geqslant 1$ be an integer, and suppose that $A_1,A_2,\ldots,A_n\subseteq [2n]$ is a collection of segments of consecutive integers whose ends form a partition of the set $[2n]$. Then $$l(A_1)+l(A_2)+\cdots+l(A_n)\leqslant n^2.$$
\end{lemma}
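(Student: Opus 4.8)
The plan is to rewrite the quantity to be bounded purely in terms of the endpoints. Write each segment as $A_i=[a_i,b_i]$ with $a_i\leqslant b_i$, so that $l(A_i)=b_i-a_i$ and hence
\[
\sum_{i=1}^{n} l(A_i)=\sum_{i=1}^{n} b_i-\sum_{i=1}^{n} a_i .
\]
Since the $2n$ endpoints $a_1,b_1,a_2,b_2,\ldots,a_n,b_n$ partition $[2n]$, the set $B=\{b_1,\ldots,b_n\}$ of right endpoints is an $n$-element subset of $[2n]$ and $A=\{a_1,\ldots,a_n\}$ is exactly its complement.

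Next I would bound the two sums separately using the trivial fact that, among all $n$-element subsets of $[2n]$, the sum of the elements is largest for $\{n+1,n+2,\ldots,2n\}$ and smallest for $\{1,2,\ldots,n\}$. Thus $\sum b_i\leqslant\sum_{k=n+1}^{2n}k$ and $\sum a_i\geqslant\sum_{k=1}^{n}k$, and therefore
\[
\sum_{i=1}^{n} l(A_i)\leqslant\sum_{k=n+1}^{2n}k-\sum_{k=1}^{n}k=\sum_{k=1}^{n}\bigl((n+k)-k\bigr)=n^2 .
\]

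Finally, a word on tightness and on which hypotheses actually get used. The value $n^2$ is attained, for instance, by the segments $A_i=[i,\,n+i]$ for $i=1,\ldots,n$: their endpoints manifestly partition $[2n]$ and each has geometric length $n$; more precisely, equality in the displayed chain forces $B=\{n+1,\ldots,2n\}$ and $A=\{1,\ldots,n\}$. There is no genuine obstacle here: the argument never uses that the $A_i$ are intervals, nor even the pairing condition $a_i\leqslant b_i$ — only that $[2n]$ is split into an $n$-set of ``right'' endpoints and an $n$-set of ``left'' endpoints — so the statement is essentially a one-line rearrangement inequality, the sole point requiring care being the elementary identity $\sum_{k=n+1}^{2n}k-\sum_{k=1}^{n}k=n^2$.
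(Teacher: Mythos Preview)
Your proof is correct and considerably shorter than the paper's. The paper argues by induction on $n$: it takes the segment $A_n$ whose right endpoint is $2n$, say $A_n=[2n-k,2n]$, deletes its two endpoints and shifts the remaining $2n-2$ points into $[2n-2]$, observes that at most $2n-k-1$ of the other segments had their length shortened by~$1$ under this shift, applies the inductive bound $(n-1)^2$ to the shifted configuration, and concludes that $\sum l(A_i)\leqslant (n-1)^2+(2n-k-1)+k=n^2$. Your argument bypasses this entirely by writing $\sum l(A_i)=\sum b_i-\sum a_i$ and applying the trivial extremal bound on sums of $n$-element subsets of $[2n]$, which is both more transparent and immediately identifies the equality case $B=\{n+1,\ldots,2n\}$. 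One minor quibble with your closing remarks: the pairing $a_i\leqslant b_i$ \emph{is} used, precisely to ensure $l(A_i)=b_i-a_i$ rather than $|b_i-a_i|$; what you presumably mean (and what is true) is that once the set of right endpoints and the set of left endpoints have been fixed, no further information about which $a_i$ is paired with which $b_j$ enters the bound.
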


\begin{proof} 
	The proof is by induction on $n$. It is straightforward to verify that the statement holds for some initial values of $n$. For instance, for $n=2$, it suffices to examine the following three cases:
	
	\begin{itemize}
		\item $A_1=\{1,2\}$, $A_2=\{3,4\}$.
		
		This partition can be depicted as $\tikzcircle[fill=white]{3pt}\,\tikzcircle[fill=white]{3pt}\,\tikzcircle[fill=black]{3pt}\,\tikzcircle[fill=black]{3pt}$. We have $l(A_1)+l(A_2)=1+1\leqslant2^2$.
		
		\item $A_1=\{1,2,3\}$, $A_2=\{2,3,4\}$.
		
		Here partition looks as $\tikzcircle[fill=white]{3pt}\,\tikzcircle[fill=black]{3pt}\,\tikzcircle[fill=white]{3pt}\,\tikzcircle[fill=black]{3pt}$, and we have $l(A_1)+l(A_2)=2+2\leqslant2^2$.
		
		\item $A_1=\{1,2,3,4\}$, $A_2=\{2,3\}$.
		
		Similarly, for $\tikzcircle[fill=white]{3pt}\,\tikzcircle[fill=black]{3pt}\,\tikzcircle[fill=black]{3pt}\,\tikzcircle[fill=white]{3pt}$, the statement holds as well: $l(A_1)+l(A_2)=3+1\leqslant2^2$.
	\end{itemize}
	
	Let us assume that the statement holds for $n-1$, that is, the sum of lengths of $n-1$ segments satisfying the assumption of the theorem is not greater than $(n-1)^2$. Consider $n$ segments, $A_1,\,A_2,\,\ldots,\,A_n$ in $[2n]$, with ends forming a partition of $[2n]$. We may assume that the right end of the last segment $A_n$ is $2n$, so, $A_n=[2n-k,2n]$ for some $k=1,2,\ldots,2n-1$:
	$$\underbrace{\tikzcircle[fill=white]{3pt}\,\tikzcircle[fill=white]{3pt}\,\cdots\,\tikzcircle[fill=white]{3pt}}_{2n-k-1}\,\tikzcircle[fill=black]{3pt}\,\underbrace{\tikzcircle[fill=white]{3pt}\,\tikzcircle[fill=white]{3pt}\,\cdots\,\tikzcircle[fill=white]{3pt}\,\tikzcircle[fill=black]{3pt}}_{k}.$$
	Notice that by the inductive assumption we may write $$l(A_1)+l(A_2)+\ldots+l(A_{n-1})\leqslant (n-1)^2+2n-k-1.$$ Indeed, after deleting the two black endpoints of $A_n$ and shifting the white circles into a segment, the length of each segment $A_i$ either remains unchanged or decreases by one. The later option may happen only if the two endpoints of $A_i$ were in two distinct groups of white circles (separated by the middle black one). Hence, the number of such segments is at most the size of the left group of white circles, namely $2n-k-1$. Since $l(A_n)=k$, we get
	$$l(A_1)+l(A_2)+\ldots+l(A_{n-1})+l(A_n)\leqslant (n-1)^2+2n-k-1+k=n^2,$$
	which completes the proof.
\end{proof}

We are now ready to state and prove the key property of Gauss factorizations of words.

\begin{theorem}\label{Theorem Twin-Distance Discrete}
	Let $T=G_1G_2\cdots G_{2s}$ be a Gauss factorization of a tangram $T$ with Gauss pattern $G=g_1g_2\cdots g_{2s}$. Then there exists a pair of twins $G_i=G_j=X$ within distance at most $s|X|$. Moreover, this distance is strictly smaller than $s|X|$, unless $G$ is a square and all factors $G_m$, with $1\leqslant m\leqslant 2s$, are of the same length.
\end{theorem}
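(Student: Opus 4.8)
The plan is to set up a quantitative book-keeping of twin distances in terms of the lengths of the factors, and then play the averaging Lemma \ref{Lemma Sum of Squares Inequality} against the combinatorial Lemma \ref{Lemma Interwals on 2n Balls}. Concretely, I would index the $2s$ factors by their positions and attach to each factor $G_m$ the integer interval it occupies in $T$; since the $G_m$ partition $T$, their right endpoints (say) form a partition of the set of $|T|$ possible cut positions. For a pair of twins $G_i=G_j=X$ at positions $p_i<p_j$, the distance between the two occurrences is $p_j-p_i$, and I would express this distance as a sum of lengths $|G_{i+1}|+|G_{i+2}|+\cdots+|G_{j}|$ of the intervening factors, or dually as a sum over the factors lying strictly between occurrence $i$ and occurrence $j$. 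The key idea is to sum these twin-distances over all $s$ pairs of twins, each weighted appropriately (by $1/|X|$, to match the $s|X|$ target), and to argue that the total is at most $s^2$; then at least one of the $s$ summands is at most $s$, giving a pair of twins within distance $s|X|$.

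First I would make the pairing explicit: write $T=G_1\cdots G_{2s}$ with pattern $G$ a Gauss word, so the $2s$ factors split into $s$ twin-pairs $\{G_{i_1},G_{j_1}\},\ldots,\{G_{i_s},G_{j_s}\}$, each pair sharing a common value $X_t$ with common length $a_t=|X_t|$. For the $t$-th pair I would bound the distance $d_t$ between its two occurrences by the total length of the factors strictly between them. Here is where Lemma \ref{Lemma Interwals on 2n Balls} enters: contract each factor $G_m$ to a single "ball" and each twin-pair to a segment of balls (the segment spanned by the two occurrences of that pair); these $s$ segments have endpoints among the $2s$ balls, and the endpoints form a partition of $[2s]$ precisely because each position index $m$ belongs to exactly one twin-pair. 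By Lemma \ref{Lemma Interwals on 2n Balls}, the sum of the geometric lengths of these $s$ segments is at most $s^2$ — but a geometric length counts intervening factors, not their true lengths in letters, so I would need to rescale. To convert "number of intervening factors" into "total length of intervening factors," I would partition the argument according to factor lengths, or more cleanly, apply the discrete Lemma \ref{Lemma Interwals on 2n Balls} after replacing each ball by a block of length equal to the corresponding factor; the monotone behaviour of geometric length under contraction used in the proof of Lemma \ref{Lemma Interwals on 2n Balls} should carry over. This yields $\sum_{t=1}^{s} d_t \leqslant (\text{something}) \cdot \max_t a_t$ or, better, a weighted bound $\sum_{t=1}^s d_t/a_t \leqslant s^2$ after invoking Lemma \ref{Lemma Sum of Squares Inequality} on the $a_t$; either way, by averaging, some $d_t \leqslant s\,a_t = s|X_t|$, which is the first assertion.

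For the "moreover" part I would track the equality cases through both lemmas. Equality in the averaging step forces all twin-distances $d_t$ to be equal to $s\,a_t$; equality in Lemma \ref{Lemma Sum of Squares Inequality} forces all the factor lengths $a_t$ to coincide; and equality in Lemma \ref{Lemma Interwals on 2n Balls} — the extremal partition being forced — should pin down the combinatorial type of the Gauss pattern $G$ to be exactly a square $g_1\cdots g_s g_1 \cdots g_s$ (up to relabeling), with the two halves aligned so that no factor is "split across the middle." Putting these together gives: if the bound $s|X|$ is attained for every pair, then $G$ is a square and all $G_m$ have the same length; contrapositively, if $G$ is not a square or the factors are not all of equal length, the distance is strictly less than $s|X|$ for at least one pair. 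I expect the main obstacle to be exactly this conversion step — passing from the "count of intervening factors" world, where Lemma \ref{Lemma Interwals on 2n Balls} lives, to the "sum of lengths of intervening factors" world that the statement $s|X|$ demands — and making sure the equality analysis survives that conversion intact; this is where the interplay of the two technical lemmas has to be orchestrated carefully rather than applied as a black box.
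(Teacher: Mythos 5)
Your overall architecture (play Lemma \ref{Lemma Sum of Squares Inequality} against Lemma \ref{Lemma Interwals on 2n Balls}, then do an averaging/equality analysis) matches the paper, but the step you yourself flag as ``the main obstacle'' --- converting the count of intervening \emph{factors} into a sum of intervening \emph{letters} --- is exactly where the proposal breaks, and neither of the two fallback bounds you offer repairs it. The weighted inequality $\sum_t d_t/a_t\leqslant s^2$ is false: for the pattern $G=ABBA$ with $|A|=1$ and $|B|=m$ one gets $d_A/a_A=2m+1$ and $d_B/a_B=1$, so the sum is $2m+2$, which exceeds $s^2=4$ for every $m\geqslant 2$. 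The bound $\sum_t d_t\leqslant(\text{something})\cdot\max_t a_t$ does not yield the required \emph{per-pair} inequality $d_t\leqslant s\,a_t$ either. Your application of Lemma \ref{Lemma Interwals on 2n Balls} on $[2s]$ (one ball per factor) controls the wrong quantity, and no rescaling of that instance recovers the letter-level statement.

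The missing idea in the paper is to apply Lemma \ref{Lemma Interwals on 2n Balls} at the level of \emph{letter positions} rather than factor indices, with each twin pair contributing not one segment but $|X|$ of them: for $G_i=G_j=X$ starting at letter positions $p_i<p_j$, take the translates $[p_i+h,\,p_j+h]$ for $h=0,1,\ldots,|X|-1$. The left endpoints of these segments are exactly the letters of $G_i$ and the right endpoints exactly the letters of $G_j$, so over all $s$ twin pairs one obtains $n=\tfrac12|T|$ segments whose endpoints partition $[2n]$, and Lemma \ref{Lemma Interwals on 2n Balls} gives $\sum_t |X_t|\,d_t\leqslant n^2$ directly --- the correct weighting is $|X_t|\,d_t$, not $d_t/a_t$. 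If every $d_t\geqslant s|X_t|$, then $\sum_t |X_t|\,d_t\geqslant s\sum_t|X_t|^2\geqslant \bigl(\sum_t|X_t|\bigr)^2=n^2$ by Lemma \ref{Lemma Sum of Squares Inequality}, forcing equality throughout, hence all $|X_t|$ equal. For the ``moreover'' clause the paper does not need an equality characterization of Lemma \ref{Lemma Interwals on 2n Balls} (which the lemma does not provide, contrary to what your sketch assumes): once all factors have the same length, one only needs that a non-square Gauss word $g_1\cdots g_{2s}$ has some pair of equal letters at index-distance less than $s$, and the corresponding twins are then at distance strictly below $s|X|$. Without the replication-of-segments device and the resulting $|X_t|\,d_t$ weighting, the argument as proposed does not close.
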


\begin{proof} Let $T=t_1t_2\cdots t_{2n}$ be a tangram of length $2n$ and let $T=G_1G_2\cdots G_{2s}$ be its Gauss factorization with pattern $G=g_1g_2\cdots g_{2s}$. Suppose that $G_i=G_j=X$ is a pair of twins occurring at positions $p_i$ and $p_j$, respectively, with $p_i<p_j$. Consider then a collection $\mathcal{A}_X$ of segments in $[2n]$ of the form $[p_i+h,p_j+h]$, for $h=0,1,2,\ldots, |X|-1$. Clearly, each segment in the collection has length equal to the twin-distance of the pair $(G_i,G_j)$. So, the total sum of lengths of all segments in the collection equals $|X|(p_j-p_i)$.
	
	Suppose now that for each twin pair $G_i=G_j=X$ in $T$, the twin-distance satisfies  $p_j-p_i\geqslant s|X|$. Then we have $|X|(p_j-p_i)\geqslant s|X|^2$. Let us enumerate all twin pairs in $T$ as $$G_{i_1}=G_{j_1}=X_1,G_{i_2}=G_{j_2}=X_2,\ldots, G_{i_s}=G_{j_s}=X_s,$$and their corresponding segment families as $\mathcal{A}_{X_1},\mathcal{A}_{X_2},\ldots,A_{X_s}$. Clearly, all the endpoints of these segments are distinct and their total length is $$S=\sum_{h=1}^{s}|X_h|(p_{j_h}-p_{i_h})\geqslant s\sum_{h=1}^{s}|X_h|^2\geqslant s\cdot \frac{1}{s}\left(\sum_{h=1}^{s}|X_h|\right)^2=n^2,$$
	where the last inequality follows from Lemma \ref{Lemma Sum of Squares Inequality}. On the other hand, by Lemma \ref{Lemma Interwals on 2n Balls} we know that $S\leqslant n^2$. So, again by Lemma \ref{Lemma Sum of Squares Inequality}, we conclude that either there is a twin pair $G_i=G_j=X$ with distance $p_j-p_i<s|X|$, or all factors $X_h$ have the same length. In the later case, if $G$ is not a square, then there is a pair of letters $g_i=g_j$ with $j-i<s$. Hence, the corresponding pair of twins, $G_i=G_j=X$, is again at distance $p_j-p_i<s|X|$. This completes the proof.
\end{proof}

From the above result we easily derive proofs of Theorems \ref{Theorem Dejean Cut Number} and \ref{Theorem Main t(k)}.

\begin{proof}[Proof of Theorem \ref{Theorem Dejean Cut Number}]
	Let $D$ be any Dejean word over alphabet of size $r\geqslant 5$. Suppose that $\mu(D)\leqslant r-1$. Then $D$ must be a tangram and by Lemma \ref{Lemma Cuts and Gauss Words}, $D$ has a Gauss factorization $D=G_1G_2\cdots G_{2s}$, with $s\leqslant r-1$. We may assume that the pattern of this factorization is not a square, since otherwise $D$ would be a square itself, which cannot happen for Dejean words. So, by Theorem \ref{Theorem Twin-Distance Discrete}, there is a twin pair $G_i=G_j=X$ in $D$ with distance strictly less than $s|X|\leqslant (r-1)|X|$. This contradicts the defining property of Dejean words, so, it must be $\mu(D)\geqslant r$.
\end{proof}

\begin{proof}[Proof of Theorem \ref{Theorem Main t(k)}]
	Let $k\geqslant 4$ be a fixed integer. By Theorem \ref{Theorem Dejean}, there exists arbitrarily long Dejean words $D$ over alphabet of size $k+1$. Clearly, every factor of a Dejean word is also a Dejean word. Thus, by Theorem \ref{Theorem Dejean Cut Number}, every tangram factor $F$ of $D$ satisfies $\mu(F)\geqslant k+1$. So, $D$ is a $k$-tangram-free word, which proves that $t(k)\leqslant k+1$.
\end{proof}

	\section{Entropy compression and Zimin words; proof of Theorem \ref{Theorem Jajko}}\label{Section Entropy}
	
	The proof of Theorem \ref{Theorem Jajko} splits into two parts. In the first part we take care of sufficiently long tangrams with bounded cut number. In the first lemma we prove that they are avoidable over alphabet with $1024$ letters.
\begin{lemma}\label{lemma_avoid_long_tangrams}
	For every $k \geqslant 3$, there exist arbitrarily long words over alphabet of size $1024$ whose factors of length at least $k\log_2 k$ have cut number at least $k+1$.
\end{lemma}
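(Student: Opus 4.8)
The plan is to use the entropy compression method in its standard form, adapted to the present setting. I would build a word letter-by-letter over an alphabet of size $q=1024$, choosing each new letter uniformly at random from among all $q$ possibilities, and maintain the invariant that the current prefix has no factor of length at least $k\log_2 k$ with cut number at most $k$. Whenever appending a new letter creates a ``bad'' factor $F$ (that is, a factor of length $\geqslant k\log_2 k$ with $\mu(F)\leqslant k$), I invoke a clean-up step: I delete a suitable suffix of the current word, record a short certificate describing what happened, and continue. The key point of entropy compression is that the sequence of random choices, which carries $n\log_2 q$ bits of information after $n$ steps, can be reconstructed from (i) the final word and (ii) the log of certificates; if each certificate is cheap enough, the log is too short to encode all the randomness, which forces the process to run forever with positive probability, producing arbitrarily long good words.

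The heart of the argument is bounding the cost of a single certificate, and this is where Lemma \ref{Lemma Cuts and Gauss Words} and Theorem \ref{Theorem Twin-Distance Discrete} do the work. When a bad factor $F$ with $\mu(F)\leqslant k$ appears as a suffix of the current word, Lemma \ref{Lemma Cuts and Gauss Words} gives a Gauss factorization $F=G_1G_2\cdots G_{2s}$ with $s\leqslant k$, and Theorem \ref{Theorem Twin-Distance Discrete} guarantees a twin pair $G_i=G_j=X$ at distance at most $s|X|\leqslant k|X|$. I would use this close repeated factor as the deletion/recording hook: delete the portion of the word from the start of the first copy of $X$ onward (so we ``undo'' a block of letters that was forced to repeat), and record in the log just enough to recover those deleted letters. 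Since the second copy of $X$ survives (it lies earlier, at distance $\leqslant k|X|$ behind), the deleted letters are essentially determined by data of size roughly $|X|$ plus $O(\log k)$ bits locating the offset — the $\log_2 k$ in the length threshold $k\log_2 k$ is exactly what makes $|F|$, and hence $|X|$, large enough relative to $k$ for the bookkeeping to balance; here one uses that a Gauss factorization into $2s\leqslant 2k$ parts of a word of length $\geqslant k\log_2 k$ has a part of length $\geqslant \tfrac{1}{2}\log_2 k$, so $|X|$ is comparable to the number of bits needed to name an offset in $[k]$.

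Concretely, I would set up the counting so that after $n$ random steps the number of possible ``histories'' (final word of some length $\ell\leqslant n$, together with the certificate log) is at most $q^{\ell}\cdot C^{r}$ for some constant $C$ and $r$ the number of clean-up steps, while crucially each clean-up deletes at least as many letters as the certificate has ``entropy cost'', so that $n\log_2 q$ bits get injectively mapped into a space of size $o(q^n)$ unless $r$ is bounded. Choosing $q=1024$ comfortably beats the constant $C$ coming from the $O(\log k)$-bit offsets and the Catalan-type count of Gauss patterns. The main obstacle I anticipate is making the deletion rule well-defined and the decoding genuinely injective: a single new letter could complete several bad factors at once, and the twin pair handed to us by Theorem \ref{Theorem Twin-Distance Discrete} need not be a suffix-anchored one, so I must specify a canonical choice (say, the shortest bad suffix factor, then the twin pair it yields) and verify that from the log one can replay the algorithm deterministically. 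Once the decoding map is pinned down, the inequality $r\cdot(\text{cost per step}) \leqslant n\log_2 q - \Omega(n)$ is a routine computation, and a standard counting argument (or a direct probabilistic estimate bounding the number of executions of length exactly $n$) yields that good words of every length exist over the $1024$-letter alphabet.
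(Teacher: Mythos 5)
Your overall framework---build the word greedily, delete a suffix when a bad factor appears, log a cheap certificate, and argue by an injectivity/counting (or first-moment) argument that the log cannot absorb all the entropy---is exactly the paper's framework. But the compression hook you chose does not work, and it is a different (and weaker) hook than the one the paper uses. You propose to extract the saving from Lemma \ref{Lemma Cuts and Gauss Words} and Theorem \ref{Theorem Twin-Distance Discrete}: find a twin pair $G_i=G_j=X$ at distance at most $s|X|\leqslant k|X|$ and reconstruct one copy of $X$ from the other, so the saving per clean-up is about $|X|\log_2 1024$ bits against a certificate that must at least name the offset, costing on the order of $\log_2(k|X|)$ bits. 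The gap is that Theorem \ref{Theorem Twin-Distance Discrete} gives no lower bound whatsoever on $|X|$: the close twin pair it produces can consist of single letters (e.g.\ an adjacent pair $aa$ in the Gauss factorization), in which case you save $10$ bits but pay $\log_2 k$ bits for the offset, a net loss for all large $k$. Your attempted fix---that some part of a $2s$-part factorization of a word of length $\geqslant k\log_2 k$ has length $\geqslant\tfrac12\log_2 k$---is true but irrelevant, because that long part need not be the twin pair that Theorem \ref{Theorem Twin-Distance Discrete} hands you. (There is also an internal inconsistency in the deletion rule: if you delete from the start of the \emph{first} copy of $X$ onward, no copy of $X$ survives to reconstruct from; you presumably mean the later copy, but as noted this does not rescue the accounting.)

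The paper's proof uses a much stronger compression mechanism that does not involve the Gauss-word machinery at all (that machinery is only used for Theorem \ref{Theorem Main t(k)}). When a bad suffix $F$ with $|F|\geqslant k\log_2 k$ and $\mu(F)\leqslant k$ appears, the \emph{entire} suffix $F$ is deleted, and it is recorded by: its length, the $k$ cut positions, the permutation $\sigma$, and only $|F|/2$ letters of the alphabet---because a tangram with a known cut structure is determined by one of its two identical halves. This saves a factor of $1024^{1/2}=2^5$ per deleted letter, against a combinatorial overhead of $2^4$ per letter (steps, lengths, cuts, permutations each contribute at most $2^N$), which is exactly why $1024$ letters suffice. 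The threshold $k\log_2 k$ is used to amortize the $(k+1)!\leqslant k^k$ permutation count down to $2^N$, not to lower-bound the length of a repeated factor. To repair your argument you would need to replace the twin-pair hook with this ``store half the tangram'' idea, or else prove a version of Theorem \ref{Theorem Twin-Distance Discrete} that guarantees a \emph{long} close twin pair, which is false in general.
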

\begin{proof}
	Fix $k\geqslant 3$ and an alphabet $\mathbb{A}$ of size $1024$. For a natural number $N$, let $f(N)$ denote the number of words of length at most $N$ over $\mathbb{A}$ that do not contain factors of length at least $k\log_2 k$ with cut number at most $k$. Note that, in order to prove the lemma, it suffices to show that $f(N)$ goes to infinity with $N$. From now on we assume that $N$ is fixed and head towards the conclusion that $f(N)\geqslant 2^N$.
	
	Consider Algorithm \ref{algorithm_compression} that encodes a sequence of length $N$ over $\mathbb{A}$.
	
	\begin{algorithm}
		\caption{Encoding of a sequence of length $n$ over some alphabet $\mathbb{A}$ of size $1024$.}
		\label{algorithm_compression}
		\begin{algorithmic}[1]
			\Procedure{Encode}{$x_1, x_2, \ldots, x_N$ -- a sequence over $\mathbb{A}$}
			\State $s \leftarrow $ empty word
			\State $L \leftarrow $ empty log
			\For{$i=1, 2, \ldots, N$}
			\State\label{step_add_letter} add $x_i$ at the end of $s$
			\If{$s$ contains a suffix $F$ of length least $k\log_2 k$ with cut number at most $k$}\label{step_erasurecondition}
			\State\label{step_information_saved} save $i$ and $F$ in $L$
			\State\label{step_remove_tangram} remove from $s$ the suffix $F$
			\EndIf
			\EndFor
			\State\Return $s$ and $L$
			\EndProcedure
		\end{algorithmic}
	\end{algorithm}
	
	We start by observing that the algorithm encodes uniquely the input sequence.
	
	\begin{claim}
		\label{claim_injective_function}
		Algorithm \ref{algorithm_compression} computes an injective function, i.e., for every two distinct inputs, the algorithm produces distinct outputs.
	\end{claim}
	
	To prove the claim note that the output is the state of the procedure after the last iteration of the main loop. Now, given the state of the procedure after the $j$-th iteration of the main loop, we can reconstruct $x_j$ and the state of the algorithm before the $j$-th iteration as follows. If $L$ does not contain an entry for step $j$, then $x_j$ is the last letter of $s$, and erasing it from the end of $s$ produces the state before the $j$-th iteration of the  main loop. Otherwise, $x_j$ is the last letter of the last suffix $F$ stored in $L$, and the state before the $j$-th iteration of the main loop is produced by appending to $s$ the word $F$ minus the last letter, and erasing the last entry from $L$. Therefore, by backwards induction on $j$, we conclude that the input sequence can be uniquely determined from its output, which completes the proof of Claim \ref{claim_injective_function}.
	
	Now we proceed to the crucial claim that gives an upper bound on the number of possible outputs returned by the algorithm.
	
	\begin{claim}
		\label{claim_number_of_outputs}
		The number of possible outputs of Algorithm \ref{algorithm_compression} is at most
		$$
		16^N 1024^{\frac{N}{2}}f(N).
		$$
	\end{claim}
	
	Note that each entry saved to $L$ in line \ref{step_information_saved} of Algorithm \ref{algorithm_compression} can be encoded by the following information.
	
	\begin{enumerate}
		\item The number $i$ of the current step.
		\item The length $\ell_i$ of the removed suffix $F$.
		\item Positions of $k$ cuts in $F$ that partition $F$ into $k+1$ nonempty words $F_1F_2, \ldots, F_{k+1}$.
		\item A permutation $\sigma$ of the set $[k+1]$ such that $F_{\sigma(1)}\cdots F_{\sigma(j)}=F_{\sigma(j+1)}\cdots F_{\sigma(k+1)}$ for some $j$.
		\item\label{loginfo_halves} A sequence of $\frac{\ell_i}{2}$ letters from $\mathbb{A}$ containing $F_{\sigma(1)}\cdots F_{\sigma(j)}$.
	\end{enumerate}

Notice that sometimes the number of cuts in the removed suffix $F$ can be smaller than $k$, but in our encoding we do not assume that all pieces $F_i$ must change their positions. This will not affect our analysis below, since the length of deleted tangrams is at least $k\log_2 k$.
	
	Now we will count the number of possible outputs of the algorithm, considering those types of information stored throughout the whole run, in the above order.
	
	\begin{enumerate}
		\item The set of steps $i$ that are saved in $L$ is a subset of $[N]$, so there are $2^N$ possibilities.
		\item The sequence of consecutive $\ell_i$'s is a sequence of positive integers that sum up to at most $N$ (because the total number of letters removed from $s$ in line \ref{step_remove_tangram} cannot exceed the total number of letters added in line \ref{step_add_letter} of the algorithm). Such a sequence can be thought of as a segment $[0,N]$ cut into fragments of length $\ell_1, \ell_2, \ldots$ (with the last segment of length $N$ minus the sum of $\ell_i$'s, if the sum is less than $N$). There are $N-1$ possible points at which the segment can be cut, so the total number of ways in which it can be cut is at most $2^{N-1}$; we upper bound it by $2^N$.
		\item For a single suffix $F$ of length $\ell_i$, the positions of cuts can be thought of as one of $2^{\ell_i - 1}$ sets of integer points in the open interval $(0, \ell_i)$. Note that, having fixed all $\ell_i$'s, we can store all this information as a single set of integer points in the open interval $(0, \sum_i \ell_i)$; and, as remarked earlier, $\sum_i \ell_i$ is at most $N$. Therefore, having fixed the lengths of all the suffixes stored in $L$, the number of possible sequences of cut positions is at most $2^N$.
		\item The number of permutations of $k+1$ elements is $(k+1)!$, which is at most $k^k$ for $k\geqslant 3$. Now, note that each entry of $L$ corresponds to removing at least $k \log_2 k$ letters from $s$, which implies that $L$ stores at most $\frac{N}{k \log_2 k}$ entries, so the number of possibilities is at most
		$$\left(k^k\right)^{\frac{N}{k \log_2 k}} = 2^N.$$
		\item Having fixed all $\ell_i$'s, all the information in (\ref{loginfo_halves}) can be represented as a single sequence over $\mathbb{A}$ of length at most $\frac{N}{2}$, which means that the number of possibilities is at most $1024^{\frac{N}{2}}$.
	\end{enumerate}
	
	By taking the product of all five above estimations we conclude that the number of possible logs $L$ returned by the algorithm is at most $16^N\cdot1024^{\frac{N}{2}}$. Moreover, the algorithm clearly returns a word $s$ that has length at most $N$ and does not contain a factor of length smaller than $k\log_2 k$ with cut number at most $k$, and by the definition of $f$ there are $f(N)$ such words. Therefore, the proof of Claim \ref{claim_number_of_outputs} is complete.
	
	Clearly the number of possible inputs to Algorithm \ref{algorithm_compression} is $1024^N$. Therefore, Claims \ref{claim_injective_function} and \ref{claim_number_of_outputs} imply that
	$$
	16^N\cdot1024^{\frac{N}{2}}\cdot f(N) \geqslant 1024^N.
	$$
	It follows that
	$$
	f(N) \geqslant \left( \frac{1024}{16\sqrt{1024}}\right)^N = 2^N,
	$$
	which completes the proof of the lemma.
\end{proof}

For the second part of the proof of Theorem \ref{Theorem Jajko}, we need to construct words avoiding short tangrams. As mentioned in the introduction, we will use Zimin words in the following setting. Let $\mathbb{A}=\{a_1,a_2,\ldots\}$ be a countably infinite alphabet of letters. The sequence of \emph{Zimin words} over $\mathbb{A}$ is defined recursively by taking $Z_1=a_1$ and $Z_{n}=Z_{n-1}a_nZ_{n-1}$, for every $n\geqslant 2$.

\begin{lemma}
	\label{lemma_avoid_short_tangrams}
	For every $q\geqslant1$ there exist arbitrarily long words over alphabet of size $q$ whose shortest tangrams have length at least $2^q$.
\end{lemma}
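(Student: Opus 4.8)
The plan is to produce the required words explicitly from Zimin words. Fix $q\geqslant 1$ and consider the one-sided infinite word obtained by repeating the block $Z_qa_q$, which we denote
\[
W_q \;=\; (Z_qa_q)^{\infty} \;=\; Z_qa_q\, Z_qa_q\, Z_qa_q\,\cdots
\]
over the alphabet $\{a_1,\ldots,a_q\}$; our arbitrarily long words will be the prefixes of $W_q$, and since every factor of a prefix of $W_q$ is a factor of $W_q$, it suffices to prove that every tangram factor of $W_q$ has length at least $2^q$. Observe that $|Z_qa_q|=2^q$ and that every letter occurs an even number of times in $Z_qa_q$: indeed $a_j$ occurs $2^{q-j}$ times in $Z_q$ (an immediate induction on the recursion $Z_q=Z_{q-1}a_qZ_{q-1}$), so in $Z_qa_q$ the letter $a_j$ occurs $2^{q-j}$ times for $j<q$ and $a_q$ occurs exactly twice. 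In particular $Z_qa_q$ is itself a tangram factor of length exactly $2^q$, so the bound claimed in the lemma will be best possible.

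Next I would reformulate the statement in terms of prefix parities. For $p\geqslant 0$ let $\nu(p)\in\{0,1\}^{q}$ be the vector whose $j$-th coordinate is the parity of the number of occurrences of $a_j$ among the first $p$ letters of $W_q$, so $\nu(0)$ is the zero vector. A factor of $W_q$ occupying positions $p+1$ through $p'$ is a tangram if and only if $\nu(p)=\nu(p')$; hence the shortest tangram factor has length at least $2^q$ exactly when $\nu$ is injective on every block of $2^q$ consecutive integers. Because every letter occurs evenly in $Z_qa_q$, every window of $2^q$ consecutive letters of $W_q$ is a cyclic rotation of $Z_qa_q$ and therefore a tangram, so $\nu(p+2^q)=\nu(p)$ for all $p$; thus $\nu$ is periodic with period $2^q$, and the whole statement reduces to showing that $\nu(0),\nu(1),\ldots,\nu(2^q-1)$ are pairwise distinct (equivalently, form all of $\{0,1\}^q$). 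Intuitively, the $\nu(p)$ perform a walk on the hypercube $\{0,1\}^q$ flipping one coordinate at each step, and we must arrange that every $2^q$ consecutive positions of this walk hit all $2^q$ vertices.

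The core of the matter is to identify these $2^q$ vectors explicitly. For $0\leqslant p\leqslant 2^q-1$ the length-$p$ prefix of $W_q$ is just the length-$p$ prefix of $Z_q$, i.e.\ of the ruler word, in which the letter $a_{m+1}$ occupies precisely the positions $i$ whose $2$-adic valuation equals $m$. Counting such positions among $1,\ldots,p$ and reducing modulo $2$, a short manipulation of the binary digits $b_0,b_1,\ldots$ of $p$ shows that $\nu(p)_{m+1}=b_m\oplus b_{m+1}$ for $0\leqslant m\leqslant q-1$ (with $b_q=0$ since $p<2^q$). In other words $\nu(p)$ is exactly the reflected binary Gray code of $p$, and the digits of $p$ can be reconstructed from $\nu(p)$ from the top down ($b_{q-1}=\nu(p)_q$, then $b_m=\nu(p)_{m+1}\oplus b_{m+1}$); hence the restriction of $\nu$ to $\{0,1,\ldots,2^q-1\}$ is a bijection onto $\{0,1\}^q$. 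This gives the required distinctness and completes the proof.

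As for the main obstacle: conceptually the argument is short, and there are really only two points one must not overlook. The first is the binary-digit computation that pins down $\nu$ as the Gray code (the case $p$ traverses exactly one period must be handled together with the periodicity claim). The second is that the separating letter $a_q$ is genuinely needed: the naive candidate $(Z_q)^{\infty}$ fails because the first and last letters of $Z_q$ are both $a_1$, which creates the tangram $a_1a_1$ of length $2$ at every junction. Inserting $a_q$ between consecutive copies of $Z_q$ is exactly what makes one full period a tangram and turns the parity walk into a traversal of a Hamiltonian cycle of $\{0,1\}^q$.
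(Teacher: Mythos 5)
Your proof is correct, and although your word $(Z_qa_q)^{\infty}$ is literally the same infinite word as the paper's $Z_{q-1}a_qZ_{q-1}a_q\cdots$ (since $Z_qa_q=(Z_{q-1}a_q)^2$), the argument you give for it is genuinely different. The paper proceeds by a backward induction on $i$: any factor containing each of $a_i,\ldots,a_q$ a positive even number of times must enclose enough disjoint copies of lower-order Zimin words, and adding up their lengths yields the bound $2^q$; tangrams are reduced to this situation via the observation that $Z_n$ itself is tangram-free, so every tangram must contain $a_q$ and hence every letter. You instead pass to the prefix parity vectors $\nu(p)\in\{0,1\}^{q}$, note that a factor is a tangram exactly when its two bounding prefixes have equal parity vectors, show $\nu$ has period $2^q$ because one period block is a tangram, and identify $\nu$ on $\{0,1,\ldots,2^q-1\}$ with the reflected binary Gray code of $p$, hence a bijection onto $\{0,1\}^{q}$. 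Your route buys a sharper structural picture -- the parity walk is a Hamiltonian cycle of the hypercube, so the bound $2^q$ is visibly tight and one sees precisely which factors are tangrams -- at the cost of two computations you correctly flag and sketch accurately: the ruler-word description of $Z_q$ (the letter $a_{m+1}$ occupies the positions of $2$-adic valuation $m$) and the digit identity $\nu(p)_{m+1}=b_m\oplus b_{m+1}$, which follows from $\lfloor p/2^m\rfloor-\lfloor p/2^{m+1}\rfloor\equiv b_m\oplus b_{m+1}\pmod 2$. The paper's counting is more self-contained but yields only the inequality; both are complete proofs of the lemma.
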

\begin{proof}
	For $q=1$ the assertion is trivially true. Let $q\geqslant2$ be fixed. Consider an infinite periodic word $W=Z_{q-1}a_qZ_{q-1}a_qZ_{q-1}a_q\cdots$ over alphabet $\mathbb{A}_q=\{a_1, a_2, \ldots, a_q\}$, where $Z_q$ is a Zimin word. We will prove that every tangram in $W$ has length at least $2^q$. 
	
	We start with stating a simple property of Zimin words, which follows immediately from the definition of $Z_n$.
	 
	\begin{claim}
		\label{claim_notangramsinzimin}
		For every $n\geqslant 1$, the Zimin word $Z_n$ has length $2^n-1$ and does not contain tangrams.
	\end{claim}
	
	Next, we will show that every tangram in $W$ containing all letters of $\mathbb{A}_q$ must be of length at least $2^q$. Below we formulate a stronger statement that we will prove inductively. For convenience, we denote $\mathbb{A}^{(i)}_q=\{a_i, a_{i+1},\ldots, a_q \}$, for $i=1,2,\ldots, q$.
	
	\begin{claim}
		\label{claim_longtangramsinW}
		Let $1\leqslant i\leqslant q-1$ be fixed. Suppose that $F$ is a factor of $W$ containing each letter of $\mathbb{A}^{(i)}_q$ a positive even number of times. Then the length of $F$ satisfies $$|F|\geqslant 3\cdot 2^{q-2}+1+\sum_{j=i}^{q-2}2^{j-1}.$$
	\end{claim}
	We will prove the claim by backward induction on $i$. Let $F$ be a factor of $W$. For the base case, $i=q-1$, let $F$ be a factor containing letters $a_q$ and $a_{q-1}$. Note that between each occurrence of $a_q$ and $a_{q-1}$ in $W$ there is a factor $Z_{q-2}$. Since $F$ must contain at least two letters $a_q$ and two letters $a_{q-1}$, it must contain also three disjoint copies of $Z_{q-2}$ between them. So, the length of $F$ must be at least $4 + 3\cdot\left(2^{q-2} - 1\right)= 3\cdot 2^{q-2} + 1$, as desired.
	
	Now suppose that the claim is true for $i+1$ and consider a factor $F$ of $W$ whose letters from $\mathbb{A}^{(i)}_q$ occurr a positive even number of times. Let $F'$ be the shortest factor of $F$ preserving this property. Note that $F'$ must start and end with a letter from $\mathbb{A}^{(i)}_q$, as otherwise it would not be the shortest. Moreover, between every two letters from $\mathbb{A}^{(i)}_q$ in $F'$ there is a copy of $Z_{i}$. Since $F'$ contains an even number of letters from $\mathbb{A}^{(i)}_q$, it follows that it must contain an odd number of $a_i$'s. Therefore, $F$ must contain $F'$ together with at least an additional copy of $a_i$ and a factor $Z_{i-1}$ that separates every two letters from $\mathbb{A}^{(i)}_q$. Hence, the length of $F$ is at least the length of $F'$ plus $2^{i-1}$.
	
	By the induction hypothesis, the length of $F'$ is at least $$3\cdot 2^{q-2} + 1 + \sum_{j=i+1}^{q-2}2^{j-1}.$$ So, the length of $F$ is at least $$3\cdot 2^{q-2} + 1 + \sum_{j=i+1}^{q-2}2^{j-1} + 2^{i-1} = 3\cdot 2^{q-2} + 1 + \sum_{j=i}^{q-2}2^{j-1},$$ as desired. The proof of the claim is therefore complete by induction.
	
	The above two claims imply that $W$ does not contain tangrams shorter than $2^q$. Indeed, by Claim \ref{claim_notangramsinzimin}, every tangram in $W$ must contain $a_q$, and since every two occurrences of $a_q$ in $W$ are separated by $Z_{q-1}$, such a tangram must contain each symbol from $\mathbb{A}_q$. Therefore, applying Claim \ref{claim_longtangramsinW} for $i=1$, we conclude that the length of this tangram must be at least 
	$$3\cdot 2^{q-2} + 1 + \sum_{j=1}^{q-2}2^{j-1} = 3\cdot 2^{q-2} + 1 + 2^{q-2} - 1 = 2^q,$$
	which completes the proof of the lemma.
\end{proof}

Using the above two lemmas we will get the assertion of Theorem \ref{Theorem Jajko} by a suitable product construction.

\begin{proof}[Proof of Theorem \ref{Theorem Jajko}]
	Fix $k\geqslant3$. We will construct a $k$-tangram-free word of any given length $n$ over an alphabet of size $1024\lceil\log_2 k + \log_2\log_2 k\rceil$.
	
	By Lemma \ref{lemma_avoid_long_tangrams}, there exists a word $V=v_1v_2\cdots v_n$ over some alphabet $\mathbb{A}$ of size $1024$ whose all factors $F$, with $|F|\geqslant k\log_2 k$, satisfy $\mu(F)\geqslant k+1$. By Lemma \ref{lemma_avoid_short_tangrams}, applied with $q=\lceil\log_2 k + \log_2\log_2 k\rceil$, there exist a word $W=w_1w_2\cdots w_n$ over an alphabet $\mathbb{B}$ of size $q$ whose all factors $F$, with $|F|<k\log_2 k\leqslant 2^q$, satisfy $\mu(F)=\infty$.
	
	We define the word $X=x_1x_2\ldots x_n$ over the alphabet $\mathbb{A} \times \mathbb{B}$ such that $x_i=(v_i, w_i)$. Note that if $X$ contains any factor $F=x_j x_{j+1} \ldots x_{j+\ell -1}$ with cut number at most $k$ and length $|F|=\ell$, then both words, $F_V=v_jv_{j+1}\ldots v_{j+\ell -1}$ and $F_W=w_jw_{j+1}\ldots w_{j+\ell -1}$ must be factors with cut number at most $k$ and length $\ell=|F_V|=|F_W|$ in $V$ and $W$, respectively. If $\ell\geqslant k\log_2 k$, it is impossible by the choice of $V$, and in the other case, when $\ell<k\log_2 k$, it is impossible by the choice of $W$. Therefore $X$ is the desired $k$-tangram-free word of length $n$ over an alphabet of size $1024q$, which completes the proof. 
\end{proof}

\section{Lower bound on $t(k)$ and other little things}\label{Section Lower}
In the proof of Lemma \ref{lemma_avoid_short_tangrams} we used a simple fact that Zimin words $Z_n$ have length $2^n-1$ and are tangram-free. It occurs that these are the longest words with this property over a fixed alphabet.

\begin{proposition}\label{Proposition Tangram q}
Let $q\geqslant 1$ be an integer. Every word of length $2^q$ over alphabet of size $q$ contains a tangram.
\end{proposition}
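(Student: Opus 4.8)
The plan is to find, inside any word $W$ of length $2^q$ over an alphabet of size $q$, a nonempty factor in which every letter appears an even number of times. I would argue by a pigeonhole/parity argument on prefixes. For each prefix $W_j = w_1w_2\cdots w_j$ (with $W_0$ the empty prefix), record its \emph{parity vector} $\pi(W_j) \in \{0,1\}^q$, whose $\ell$-th coordinate is the parity of the number of occurrences of the $\ell$-th letter in $W_j$. A factor $w_{i+1}\cdots w_j$ is a tangram precisely when $\pi(W_i) = \pi(W_j)$, so it suffices to exhibit two distinct prefixes with equal parity vector. There are $2^q + 1$ prefixes $W_0, W_1, \ldots, W_{2^q}$ but only $2^q$ possible parity vectors, so by pigeonhole two of them coincide, giving the desired tangram.

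The subtle point is that the naive count gives $2^q+1$ prefixes for a word of length $2^q$, which already beats $2^q$ vectors — but this uses the empty prefix $W_0$, whose parity vector is the all-zero vector $\mathbf{0}$. If the only collision were ``$W_0$ with some $W_j$,'' the corresponding tangram would be the prefix $W_j$ itself, which is fine and still a legitimate tangram factor; so in fact even the crude argument already works. Hence the main step is just: list the $2^q+1$ parity vectors $\pi(W_0), \pi(W_1), \ldots, \pi(W_{2^q})$, note they live in a set of size $2^q$, apply pigeonhole to get indices $i < j$ with $\pi(W_i) = \pi(W_j)$, and conclude that $w_{i+1}w_{i+2}\cdots w_j$ is a nonempty factor of $W$ in which every letter occurs an even number of times, i.e.\ a tangram.

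I do not expect any real obstacle here; the only thing to be careful about is the off-by-one bookkeeping (remembering to include the empty prefix, and noting $j \leqslant 2^q$ so the factor genuinely lies inside $W$) and the observation that the factor obtained is nonempty because $i \neq j$. It is worth remarking that this bound is sharp: the Zimin word $Z_q$ has length $2^q - 1$ and is tangram-free by Claim~\ref{claim_notangramsinzimin}, so $2^q$ cannot be lowered. One could also phrase the argument in terms of walks in the hypercube $\mathbb{Z}_2^q$: the sequence of parity vectors is a walk of length $2^q$, and a walk of that length in a graph on $2^q$ vertices must revisit a vertex, closing a (nontrivial) closed subwalk whose edge labels spell the tangram.
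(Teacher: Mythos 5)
Your proof is correct and is essentially the same as the paper's: both track the parity vector of each prefix in $\mathbb{Z}_2^q$ and apply pigeonhole to find two equal ones, whose difference yields a tangram factor. The only cosmetic difference is that you include the empty prefix to get a single pigeonhole step, whereas the paper handles the case of a zero parity vector (i.e.\ a tangram prefix) separately.
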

\begin{proof}
Let $W=w_1w_2\cdots w_n$ be a word over alphabet $\mathbb{A}=\{a_1,a_2,\ldots,a_q\}$. For every letter $a_i\in \mathbb{A}$, let $W(a_i)$ denote the number of times the letter $a_i$ occurs in $W$. Let $N_W=(W(a_1),W(a_2),\ldots, W(a_q))$ and let $N'_W$ be the corresponding vector in $\mathbb{Z}_2^q$ whose coordinates are the residues of numbers $W(a_i)$ modulo $2$. Notice that the word $W$ is a tangram if and only if $N'_W$ is the zero vector.

Suppose now that $n=2^q$ and consider all factors $F_h$ of the word $W$ defined by $F_h=w_1w_2\cdots w_h$, for $h=1,2,\ldots, 2^q$. If for some $h$, $N'_{F_h}$ is the zero vector, then we are done. Otherwise, there must be two distinct factors, $F_i$ and $F_j$, with $i<j$, such that $N'_{F_i}=N'_{F_j}$. But then, the factor $T=w_{i+1}\cdots w_j$ must be a tangram. Indeed, since $F_j=F_iT$, we have $N'_{F_j}=N'_{F_i}+N'_T$, and $N'_T$ must be the zero vector.
\end{proof}

To get the logarithmic lower bound for the function $t(k)$ from the above result it suffices to use a trivial fact that the cut number of any tangram $T$ satisfies $\mu(T)\leqslant |T|-1$.

\begin{corollary}\label{Corollary Lower Bound}
For every $k\geqslant 1$, we have $t(k)\geqslant \log_2(k+2)$.
\end{corollary}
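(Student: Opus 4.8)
The plan is to derive the corollary directly from Proposition \ref{Proposition Tangram q} together with the elementary inequality $\mu(T)\leqslant |T|-1$, valid for every tangram $T$. First I would record why this inequality holds: cutting $T$ into its $|T|$ individual letters uses $|T|-1$ cuts, and since every letter of $T$ occurs an even number of times, the multiset of letters of $T$ splits into two equal multisets; writing the letters of each part in, say, nondecreasing order produces two copies of the same word, so these $|T|-1$ cuts witness $\mu(T)\leqslant |T|-1$.

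Now fix $k\geqslant 1$ and set $q=t(k)$; by definition there exist arbitrarily long $k$-tangram-free words over an alphabet of size $q$. Pick such a word $W$ with $|W|\geqslant 2^q$ and consider its prefix $W'$ of length exactly $2^q$. By Proposition \ref{Proposition Tangram q}, $W'$, and hence $W$, contains a tangram factor $T$, with $|T|\leqslant |W'|=2^q$. On one hand, the elementary bound gives $\mu(T)\leqslant |T|-1\leqslant 2^q-1$. On the other hand, since $W$ is $k$-tangram-free, every factor of $W$ has cut number at least $k+1$, so in particular $\mu(T)\geqslant k+1$. Combining the two estimates yields $k+1\leqslant 2^q-1$, that is, $2^q\geqslant k+2$, and therefore $q\geqslant \log_2(k+2)$, which is exactly the claimed inequality $t(k)\geqslant \log_2(k+2)$.

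There is essentially no serious obstacle here; the only point requiring a moment of care is to confine the tangram produced by Proposition \ref{Proposition Tangram q} to a factor of bounded length (the prefix of length $2^q$), so that the crude bound $\mu(T)\leqslant |T|-1$ is already strong enough to contradict $k$-tangram-freeness. Taking $W$ long enough and restricting attention to a window of length $2^q$ settles this cleanly.
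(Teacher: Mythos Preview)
Your proof is correct and follows essentially the same route as the paper: both invoke Proposition~\ref{Proposition Tangram q} on a window of length $2^q$ (with $q=t(k)$) and combine the resulting tangram with the trivial bound $\mu(T)\leqslant |T|-1$ to obtain $k+1\leqslant 2^q-1$. The only cosmetic differences are that you argue directly rather than by contradiction and pass through a prefix of length $2^q$ instead of choosing $|W|=2^q$ outright.
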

\begin{proof}
Let $k\geqslant 1$ be fixed. Suppose, on the contrary, that $t(k)<\log_2(k+2)$. This means that there exist arbitrarily long words over alphabet of size $t(k)=q<\log_2(k+2)$ whose all factors $F$ satisfy $\mu(F)\geqslant k+1$. Let $W$ be such a word of length $2^q$. By Proposition \ref{Proposition Tangram q}, there is a tangram factor $F$ of $W$. Clearly, its cut number satisfies $$\mu(F)\leqslant |F|-1\leqslant |W|-1=2^q-1<k+1,$$
which proves the asserted inequality.
\end{proof}

As mentioned in Introduction, the only known exact values of the function $t(k)$ are for $k=1$ and $k=2$. In both cases we have $t(1)=3$ and $t(2)=3$, as a consequence of the theorem of Thue \cite{Thue}. The first equality is immediate. The second one follows from a simple fact that a square-free word cannot have cut number exactly two. Indeed, assume that $W$ is square-free and $\mu(W)=2$. This means that $W=ABC$ and at least one of the following six identities is satisfied: $$A=BC,A=CB,B=AC,B=CA,C=AB,C=BA.$$It is easy to check that substituting each of these identities to $W$ produces at least one square. Indeed, in the first two cases we get $W=ABC=BCBC$ and $W=ABC=CBBC$, in the next two we obtain $W=ABC=AACC$ and $W=ABC=ACAC$, and the last two give $W=ABC=ABAB$ and $W=ABC=ABBA$.

In case of $k=3$ we only know that $t(3)=4$ or $5$.

\begin{proposition}\label{Proposition t(3)}
	We have $4\leqslant t(3)\leqslant 5$.
\end{proposition}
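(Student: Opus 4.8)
The upper bound $t(3)\leq 5$ is immediate. The function $t$ is non‑decreasing: a factor of cut number at most $3$ also has cut number at most $4$, so every $4$-tangram-free word is in particular $3$-tangram-free, whence $t(3)\leq t(4)\leq 5$ by Theorem~\ref{Theorem Main t(k)}. (Equivalently, by Theorem~\ref{Theorem Dejean Cut Number} any Dejean word over $5$ letters has all its tangram factors of cut number at least $5$, so it is $3$-tangram-free, and such words exist of all lengths by Theorem~\ref{Theorem Dejean}.)

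For the lower bound $t(3)\geq 4$ the plan is to isolate an \emph{unavoidable} family of cut-number-$\leq 3$ tangrams that is compatible with square-freeness, and then rule out arbitrarily long $3$-tangram-free ternary words. First, a structural remark: by Lemma~\ref{Lemma Cuts and Gauss Words}, together with the fact that the Gauss words on at most two letters ($xx$, $xxyy$, $xyxy$, $xyyx$) force a square in every substitution instance, a \emph{square-free} tangram has cut number at most $3$ if and only if it is a substitution instance, with nonempty substitutes, of one of the three Gauss words on three letters $xyxzyz$, $xyzxzy$, $xyzyxz$ (these are exactly the double-occurrence words of length $6$ on three letters having square-free instances). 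Conversely, every instance of each of these three patterns is a tangram of cut number at most $3$: for example $XYXZYZ=(XY)(X)(Z)(YZ)$ with the rearrangement $(XY)(Z)=(X)(YZ)$; $XYZXZY=(XYZ)(X)(Z)(Y)$ with $XYZ=(X)(Y)(Z)$; and $XYZYXZ=(X)(Y)(ZY)(XZ)$ with $(X)(ZY)=(XZ)(Y)$. Consequently, a $3$-tangram-free word over a three-letter alphabet must be square-free (a square is a cut-number-$1$ tangram) and must avoid all three patterns $xyxzyz$, $xyzxzy$, $xyzyxz$.

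It therefore suffices to prove that every sufficiently long square-free ternary word contains a factor matching one of $xyxzyz$, $xyzxzy$, $xyzyxz$; this contradicts the existence of arbitrarily long $3$-tangram-free ternary words and finishes the proof. This last step is the crux, and the main obstacle. The natural route is via the rigidity of ternary square-free words: between two consecutive occurrences of a fixed letter the intervening factor is a square-free word over the other two letters, hence one of $1,2,12,21,121,212$ (up to relabelling), so the whole word is built from very short blocks; following how these blocks and their repetitions are forced to recur one shows that after a bounded number of letters one of the three configurations above must appear. This is a finite case analysis and is conveniently organised — or simply verified — by a bounded backtracking search through ternary square-free words. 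The difficulty is precisely that $xyxzyz$ is not a factor of any Zimin word, so Zimin's classical unavoidability criterion does not apply directly, and the required unavoidability has to be extracted from the structure theory of ternary square-free words (or certified computationally); everything else, including the upper bound, the reduction to square-free words, and the verification that instances of the three patterns have cut number at most three, is routine.
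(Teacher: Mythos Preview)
Your upper bound argument is correct and identical to the paper's. Your structural reduction for the lower bound --- classifying the square-free Gauss patterns on at most three letters and reducing the question to the unavoidability of $xyxzyz$, $xyzxzy$, $xyzyxz$ in square-free ternary words --- is also correct and in fact more systematic than what the paper does. However, there is a genuine gap: you correctly identify the crux (``this last step is the crux, and the main obstacle'') and then do not prove it. Saying that the unavoidability ``is a finite case analysis'' or can be ``certified computationally'' is not a proof; you must actually carry out the argument or exhibit the bounded search.

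The paper closes exactly this gap with a short direct argument that bypasses your pattern classification entirely. Every square-free ternary word of length at least $6$ contains a factor of the form $aba$ (for some distinct letters $a,b$), since the only square-free ternary words without such a factor are prefixes of $abcabc\cdots$ of length at most $5$. In a square-free word this $aba$ is forced to extend on both sides to $cabac$ (the neighbours cannot be $a$ or $b$). Extending once more, the new neighbours cannot be $c$; if either is $b$ one obtains the factor $bcabac$ or $cabacb$, each a length-$6$ tangram of cut number $3$ (via $bca|b|a|c$, respectively $cab|a|c|b$), and the word fails to be $3$-tangram-free. The only remaining option is that both neighbours are $a$, giving $acabaca=Z_3$; but $Z_3$ cannot be extended on either side without creating a square, so it cannot occur as a proper factor of a longer square-free word. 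Hence no sufficiently long ternary word is $3$-tangram-free, and $t(3)\geqslant 4$.

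Note that this argument shows that already the two concrete six-letter tangrams $bcabac$ and $cabacb$ (instances, with single-letter substitutions, of your patterns $xyzxzy$ and $xyzyxz$) are unavoidable in long square-free ternary words; the third pattern $xyxzyz$ is not needed. So the missing step in your write-up is both the only nontrivial one and considerably easier than the general pattern-unavoidability problem you set up.
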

\begin{proof}
	The upper bound follows from Theorem~\ref{Theorem Main t(k)}, since $t(3)\leqslant t(4)\leqslant 5$. 
	
	To verify the lower bound, suppose that $W$ is a sufficiently long square-free word over alphabet $\{a,b,c\}$. Then $W$ must contain a \emph{palindrome} of the form $aba$ in every factor of length six. Indeed, the only way to avoid palindromes using three letters is to construct a periodic word $abcabcabc\cdots$. Hence, the longest square-free and palindrome-free word over three letters has the form $abcab$. The palindromic factor $aba$ can be extended from both sides only by appending the letter $c$, giving a longer palindromic factor $cabac$ in $W$. Now further extension from both sides gives either the Zimin word $Z_3=acabaca$, or the word with prefix $bcabac$ or suffix $cabacb$. The word $Z_3$ cannot be further extended without creating a square. The other two words have cut number three, which is seen in cuttings $bca|b|a|c$ and $cab|a|c|b$. It follows that any sufficiently long ternary word must contain a tangram with cut number at most three. It follows that $t(3)\geqslant 4$.
\end{proof}

\section{Closing remarks}
Let us conclude the paper with posing some open problems and sketching a more general landscape for future investigations.

Theorem \ref{Theorem Jajko} and Corollary \ref{Corollary Lower Bound} imply together that $t(k)=\Theta(\log_2k)$. It is natural to wonder how close is actually $t(k)$ to the function $\log_2k$.

\begin{problem}
Is there a constant $C$ such that $t(k)\leqslant \log_2k+C$?
\end{problem}

It would be also nice to know more on some small values of $t(k)$. In particular, whether $t(3)=4$ or $5$.

\begin{problem}
	Determine $t(3)$.
\end{problem}

One tempting approach could be to look at the corresponding quaternary Dejean words. However, the example found by Pansiot in \cite{Pan1984} is not $3$-tangram-free. Indeed, the infinite word $N$ from \cite{Pan1984} having the desired extremal property starts with $$N=abcadbacdabcdacbdcadbacdabca\cdots.$$ Unfortunately, at position $9$ it contains a factor $dabcdacb$ whose cut number is $3$: $$dabc|da|c|b.$$

Referring to the necklace splitting problem, mentioned in the introduction, we may examine similar avoidance problems with respect to anagrams. Given a tangram $T$, one may define its \emph{split number} $\alpha(T)$ as the least number of cuts needed to decompose it into factors that can be made into a pair of anagrams. For instance, $\alpha(abcacb)=1$, while $\alpha(aabbcc)=3$. By the necklace splitting theorem \cite{Alon-West,Goldberg-West}, we know that $\alpha(T)\leqslant q$ holds for any tangram $T$ over an alphabet with $q$ letters, which is best possible.

Now, for a fixed $k\geqslant1$, one may define \emph{$k$-anagram-free} words as those words whose all factors $F$ satisfy $\alpha(F)\geqslant k+1$. In particular, $1$-anagram-free words coincide with words avoiding \emph{abelian squares} (see \cite{FiciPuzynina}). Let $a(k)$ denote the least size of an alphabet needed to construct arbitrarily long $k$-anagram-free words.

Extending theorem of Thue \cite{Thue}, and solving a problem posed by Erd\H{o}s \cite{Erdos}, Ker\"{a}nen \cite{Keranen} constructed an infinite word over four letters without abelian squares. So, we know that $a(1)=4$. This is currently the only known value of this function.

\begin{conjecture}
	For every $k\geqslant 1$, we have $a(k)\leqslant k+3$.
\end{conjecture}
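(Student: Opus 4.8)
The conjecture sits in a narrow window: the \emph{necklace splitting theorem} quoted in the introduction gives $\alpha(F)\leqslant q$ for every tangram $F$ over $q$ letters, while Proposition~\ref{Proposition Tangram q} guarantees a tangram factor in any word of length $2^q$ over $q$ letters; together these force $a(k)\geqslant k+1$, so the conjecture asserts only that the truth is $k+1$, $k+2$, or $k+3$. The natural attack on the upper bound is to mimic the proofs of Theorems~\ref{Theorem Main t(k)} and~\ref{Theorem Jajko}, with the Gauss-word analysis of Section~\ref{Section Gauss} replaced by an abelian counterpart. Here the structural input is almost a tautology: $\alpha(F)\leqslant k$ holds exactly when the positions of $F$ admit a $2$-coloring with at most $k$ color changes in which each color class carries half of every letter of $F$. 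Thus, unlike in the exact case, there is no refinement step and no literal ``twins'': what must be forbidden is a \emph{balanced, low-complexity $2$-coloring} of every tangram factor.

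Since $a(1)=4$ by Ker\"{a}nen's theorem, the target $k+3$ is essentially ``one new letter per unit of $k$'', which points to an inductive construction. Passing from a $(k-1)$-anagram-free word $W_{k-1}$ over $k+2$ letters to a $k$-anagram-free word $W_{k}$ over $k+3$ letters, I would insert a fresh letter $a$ into $W_{k-1}$ in a Zimin-sparse pattern, in the spirit of Lemma~\ref{lemma_avoid_short_tangrams}: any two consecutive occurrences of $a$ are separated by a block already carrying the whole alphabet of $W_{k-1}$. A tangram factor $F$ of $W_{k}$ with $\alpha(F)\leqslant k$ must use $a$ an even, hence positive, number of times, so it spans such a block; an induction on the alphabet in the style of Claim~\ref{claim_longtangramsinW} then forces $|F|$ to grow until a balanced bipartition into few groups becomes arithmetically impossible. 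Long tangram factors of small split number would have to be eliminated separately over a bounded alphabet by an entropy-compression count in the style of Lemma~\ref{lemma_avoid_long_tangrams}, and the two ingredients combined by a product as in the proof of Theorem~\ref{Theorem Jajko}; but the letter budget is only $k+3$, so the ``long-tangram'' component cannot itself cost $1024$ letters, and reconciling this with the construction is a delicate point, as is controlling the seams where the fresh letter is inserted.

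The reason this remains open --- and the step I expect to be the genuine obstacle --- is that abelian repetitions do not compress. Knowing that two blocks are anagrams yields only their common Parikh vector, not their letters, so neither the Gauss-word machinery (which exploited that twin blocks are \emph{equal}, hence that one copy reconstructs the other) nor the bookkeeping of Algorithm~\ref{algorithm_compression} (which stored only half the letters of a deleted tangram) transfers, and correspondingly the segment-counting Lemmas~\ref{Lemma Sum of Squares Inequality} and~\ref{Lemma Interwals on 2n Balls} no longer apply, since they assumed literal twins. In particular the Dejean route is hopeless: the Dejean property does not exclude a factor such as $abcacb$, which is square-free yet has split number $1$. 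What is really needed is the correct abelian surrogate for ``large critical exponent'' --- a word property realizable over only $k+3$ letters that rules out every factor of split number at most $k$ --- together with a compression-free way to count long abelian tangrams. I would expect the structural observation and the short-tangram induction to be routine once set up carefully, but making the long-tangram count work without compressibility, while spending exactly one letter per increment of $k$ so as to land on Ker\"{a}nen's value at $k=1$, looks like it needs a genuinely new idea.
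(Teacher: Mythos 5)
This statement is an open \emph{conjecture}: the paper offers no proof of it, only the remark that it originates as a question of Alon, Grytczuk, Micha{\l}ek, and Laso\'n and that a measurable analogue on $\mathbb{R}$ is known. Your submission, accordingly, is not a proof but a research plan, and you say as much yourself. The framing observations are sound and worth keeping: the lower bound $a(k)\geqslant k+1$ does follow from combining the necklace splitting bound $\alpha(F)\leqslant q$ with Proposition~\ref{Proposition Tangram q}; the reformulation of $\alpha(F)\leqslant k$ as a balanced $2$-coloring of the positions of $F$ with at most $k$ color changes is the right way to see the problem; and your diagnosis that the Dejean route fails (square-freeness does not exclude abelian squares such as $abcacb$) and that the entropy-compression bookkeeping of Algorithm~\ref{algorithm_compression} breaks down because an anagram of a block does not determine the block are exactly the reasons the exact-tangram machinery of Sections~\ref{Section Gauss} and~\ref{Section Entropy} does not transfer.

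The gap, however, is the entire argument. No construction of a $k$-anagram-free word over $k+3$ letters is exhibited for any $k\geqslant 2$, and none of the proposed lemmas (the Zimin-sparse insertion step, the abelian analogue of Claim~\ref{claim_longtangramsinW}, the long-tangram count) is proved or even precisely stated. Two specific soft spots in the sketch: first, ``an even, hence positive, number of times'' is a non sequitur --- zero is even, and a tangram factor avoiding the fresh letter entirely lands back in $W_{k-1}$, where the inductive hypothesis only gives $\alpha(F)\geqslant k$ rather than the required $k+1$, so the induction as described does not close; second, the product construction of Theorem~\ref{Theorem Jajko} multiplies alphabet sizes, which is incompatible with a budget of $k+3$ letters, so ``combine by a product as in Theorem~\ref{Theorem Jajko}'' cannot be taken literally. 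As an assessment of why the conjecture is open your text is accurate; as a proof it establishes nothing beyond the (correct, but unclaimed-by-the-conjecture) lower bound.
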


Actually this problem was stated (as a question) by Alon, Grytczuk, Michałek, and Lasoń in \cite{AlonGML}, where they considered a continuous version of anagram-free words. They proved there that an analogous inequality holds for measurable colorings of the real line $\mathbb{R}$.

\end{document}